\newtheorem{thm}{Theorem}[section]
\newtheorem{cor}[thm]{Corollary}
\newtheorem{prop}[thm]{Proposition}
\newtheorem{lemma}[thm]{Lemma}
\theoremstyle{definition} 
\newtheorem{defn}[thm]{Definition}
\newtheorem{eg}[thm]{Example}
\newtheorem{rmk}[thm]{Remark}
\newcommand{\calC}{\mathcal{C}}
\newcommand{\U}{\tilde{U}}
\newcommand{\into}{\hookrightarrow}
\newcommand{\orbita}{\ensuremath{\mathcal{O}_{G_1}}}
\newcommand{\orbitb}{\ensuremath{\mathcal{O}_{G_2}}}
\newcommand{\orbit}{\ensuremath{\mathcal{O}_G}}
\newcommand{\semi}[2] {{\textstyle{\int}_#2\kern-.1em#1}}
\newcommand{\F}{\underline{\Pi}_X}
\newcommand{\fg}{\Pi_G}
\newcommand{\fun}{\mathcal{F}}
\newcommand{\Cat}{\mbox{\sf Cat}}
\newcommand{\op}{{\mbox{\scriptsize op}}}
\definecolor{laura}{rgb}{.4, 0, .6}
\begin{document}
\date{\today}

\title{The Equivariant Fundamental Groupoid as an Orbifold Invariant}

\author[D. Pronk and  L. Scull]{Dorette Pronk and Laura Scull}
\address{Department of Mathematics and Statistics,
Chase Building,
Dalhousie University, 
Halifax, NS, B3H 4R2, Canada}
\address{Department of Mathematics, Fort Lewis College,  1000 Rim Drive, 
Durango, Colorado 81301-3999    
USA
}
\email{pronk@mathstat.dal.ca, scull\_l@fortlewis.edu}

\begin{abstract} We construct a 2-category version of tom Dieck's equivariant fundamental groupoid for representable orbifolds and show that the discrete fundamental groupoid is Morita invariant; hence an orbifold invariant for representable orbifolds. \end{abstract}

\thanks{The first author's research is funded by an NSERC Discovery Grant.}
\keywords{equivariant homotopy theory, orbifolds, fundamental groupoid, groupoids, Morita invariance}
\subjclass[2010]{55Q91, 18E15, 57R18}

\maketitle
\section{Introduction}
The equivariant fundamental category $\pi_1(G, X)$ of a $G$-space $X$ was first defined by tom Dieck in (\cite{tD}, Definition 10.7).    This category incorporates information from the fundamental groupoids of the fixed sets $X^H$ of $X$ for subgroups $H$ of $G$, combining them to create a category fibred in groupoids over the orbit category of $G$.     When $G$ is a compact Lie group, tom Dieck also defines a discrete fundamental group category $\pi_1^d(G, X)$  in (\cite{tD}, Definition 10.9), which removes some of the information coming from the topology of the group itself.     Since their introduction, these have been used   in a variety of equivariant applications, such as covering spaces, orientation theory, equivariant surgery theories,  and homology theories with twisted coefficients;  see \cite{CW, CW2, LM, LM2, MP}.  
 
  In this paper, we define a discrete tom Dieck fundamental group category for representable orbifolds.  An orbifold is representable if it is a quotient of an action of a compact Lie group on a manifold.   Many, possibly all, orbifolds can be represented  in this way.  Given such a representation, we can apply  tom Dieck's definition to create a category   $\pi_1^d(G, X)$.     However,  the $G$-space representing an orbifold  is not unique:  two translation groupoids coming from group actions represent the same orbifold if and only if they are Morita equivalent.  
  
  Morita equivalence in general corresponds to a zig-zag of essential equivalences between groupoids.  However, in  
 \cite{PS},  we showed that for translation groupoids,  Morita equivalence  is generated by two specific types of equivariant maps:  one coming from the quotient of a subgroup which acts freely, and the other by including a space into a larger space with an induced action of a larger group (Prop.  3.5 of \cite{PS}, described in detail in Section 5).  
 
 Tom Dieck's (non-discrete) $\pi_1(G, X)$ is not necessarily invariant under Morita equivalence.  However, we will build a 2-category version $\fg(X)$ of this category which is functorial with respect to equivariant maps and each of our types of Morita equivalences induces a weak equivalence of 2-categories.  So the $\fg(X)$ is an orbifold invariant.  Furthermore, when we quotient this 2-category by its 2-cells we obtain tom Dieck's discrete category $\pi_1^d(G, X)$. So we arrive at the main result of this paper:  $\pi_1^d(G, X)$ is an orbifold invariant for representable orbifolds.

\section{Background on Categorical Constructions}

In this section, we review some categorical constructions that we will use to frame our definition of tom Dieck's equivariant fundamental groupoid.  
\begin{defn} \label{groth} 
Let  $\calC$ be a 
category,  and  ${\mathcal{F}}\colon\calC^{\op}\rightarrow\Cat$ be a contravariant functor, where $\Cat$ is the 1-category of 1-categories.   
The Grothendieck  category 
$\int_{\calC}\fun$
is defined by:  
\begin{itemize}  \item An object is a pair  $(C,x)$ with $C \in \calC_0$ and  $x\in F(C)_0$.
\item  An arrow  $(g,\psi)\colon(C,x)\rightarrow(C',x')$ is a pair with 
 $g\colon C\rightarrow C'$ in $\calC_1$
and $\psi\colon x\rightarrow {\mathcal{F}}(g)(x')$ in ${\mathcal{F}}(C)_1$. 
\item Composition is defined  using the fact that  ${\mathcal{F}}(g)\colon{\mathcal{F}}(C')\rightarrow{\mathcal{F}}(C)$ is  a functor between categories:
$$
\left(\xymatrix@1{(C,x)\ar[r]^-{(g,\psi)} & (C',x')\ar[r]^-{(g',\psi')} & (C'',x'')}\right) = 
	\xymatrix@1@C=7em{(C,x)\ar[r]^-{(g'\circ g,{\mathcal{F}}(g)(\psi')\circ\psi)} & (C'',x'')},
$$
\end{itemize}
This  category  comes with a projection functor  $\int_{\calC}F\rightarrow {\calC}$.
\end{defn}

 A bicategorical version of this construction was introduced in \cite{Ba},  and full details of this construction and its properties for both the 2-categorical and bicategorical cases were worked out in \cite{Bu}.  The 2-categorical case deals with  a functor into the 3-category of 2-categories.  This is more general than what we need to express tom Dieck's constructions.    Therefore we will spell  out explicitly what Buckley's category of elements \cite{Bu} becomes in the case of  a functor $F:  C^{op} \to Cat$ when $C$ is a 2-category, and $Cat$ is the 2-category of 1-categories.    For further details we refer to \cite{Bu}.

\begin{defn}\label{groth2}
Let  $\calC$ be a 
 2-category,  and  ${\mathcal{F}}\colon\calC^{\op}\rightarrow\Cat$ be a contravariant 2-functor.  
The 2-dimensional Grothendieck construction  $\int_{\calC}{\mathcal{F}}$ is a $2$-category defined by:  
\begin{itemize} \item An object is a pair  $(C,x)$ with $C \in \calC_0$ and  $x\in F(C)_0$ as in Definition \ref{groth}. 
\item  An arrow  $(g,\psi)\colon(C,x)\rightarrow(C',x')$ is a pair with 
 $g\colon C\rightarrow C'$ in $\calC_1$
and $\psi\colon x\rightarrow {\mathcal{F}}(g)(x')$ in ${\mathcal{F}}(C)_1$ as in Definition \ref{groth}. 
\item A  2-cell
$\alpha\colon(g,\psi)\Rightarrow(g',\psi')\colon (C,x)\rightrightarrows(C',x')$
is a 2-cell $\alpha\colon g\Rightarrow g'$ in $\calC$ such that the diagram 
$$
\xymatrix{
 x\ar@{=}[d]\ar[r]^-{\psi}&{\mathcal{F}}(g)(x')\ar[d]^{{\mathcal{F}}(\alpha)_{x'}} 
\\
x \ar[r]_-{\psi'} & {\mathcal{F}}(g')(x')
}
$$
commutes in ${\mathcal{F}}(C)$.
\end{itemize}
\end{defn}
Horizontal and vertical composition of 2-cells is defined using the usual compositions of natural transformations.
It is obvious that vertical composition thus defined gives rise to the required commutative squares.
Horizontal composition is a little more involved, and we include the details here.

Let $\alpha\colon(g,\psi)\Rightarrow(g',\psi')\colon (C,x)\rightrightarrows(C',x')$ and 
$\beta\colon (h,\theta)\Rightarrow (h',\theta')\colon (C',x')\rightrightarrows(C'',x'')$ 
be 2-cells in $\int_{\calC}{\mathcal{F}}$.
Thus, we have commuting squares
\begin{equation}\label{squares}
\xymatrix{
x\ar@{=}[d]\ar[r]^-\psi & {\mathcal{F}}(g)(x')\ar[d]^{{\mathcal{F}}(\alpha)_{x'}}
\\
x\ar[r]_-{\psi'} & {\mathcal{F}}(g')(x')
}
\mbox{ in }{\mathcal{F}}(C),\mbox{ and \,\,\,\, }
\xymatrix{
x'\ar@{=}[d]\ar[r]^-\varphi & {\mathcal{F}}(h)(x'')\ar[d]^{{\mathcal{F}}(\beta)_{x''}}
\\
x'\ar[r]_-{\varphi'} & {\mathcal{F}}(h')(x'')}
\mbox{in }{\mathcal{F}}(C').
\end{equation}
Apply both ${\mathcal{F}}(g)$ and ${\mathcal{F}}(g')$ to the second square in (\ref{squares}) to obtain two commuting squares
in ${\mathcal{F}}(C)$. By the naturality of ${\mathcal{F}}(\alpha)$ the two resulting squares are part of 
a commuting cube. We append to this cube the first commuting square from (\ref{squares}):
$$
\xymatrix{
	& x\ar@{=}[dl]\ar[r]^-\psi 
	&  {\mathcal{F}}(g)(x') \ar[dl]_{{\mathcal{F}}(\alpha)_{x'}} \ar@{=}[dd]|\hole \ar[rr]^{{\mathcal{F}}(g)(\varphi)} 
	&& {\mathcal{F}}(g){\mathcal{F}}(h)(x'') \ar[dd]^{{\mathcal{F}}(g)({\mathcal{F}}(\beta)_{x''})} \ar[dl]^{{\mathcal{F}}(\alpha)_{{\mathcal{F}}(h)(x'')}}
\\
x \ar[r]_-{\psi'} & {\mathcal{F}}(g')(x')\ar@{=}[dd] \ar[rr]^(.6){{\mathcal{F}}(g')(\varphi)} 
	&& {\mathcal{F}}(g'){\mathcal{F}}(h)(x'') \ar[dd]^(.3){{\mathcal{F}}(g')({\mathcal{F}}(\beta)_{x''})}
\\
	&& {\mathcal{F}}(g)(x') \ar[dl]_{{\mathcal{F}}(\alpha)_{x'}} \ar[rr]|(.46)\hole_(.3){{\mathcal{F}}(g)(\varphi')} 
	&& {\mathcal{F}}(g){\mathcal{F}}(h')(x'')\ar[dl]^{{\mathcal{F}}(\alpha)_{{\mathcal{F}}(h')(x'')}} 
\\
	& {\mathcal{F}}(g')(x') \ar[rr]_{{\mathcal{F}}(g')(\varphi')} && {\mathcal{F}}(g'){\mathcal{F}}(h')(x'')
}
$$
Then ${\mathcal{F}}(\beta\circ\alpha)={\mathcal{F}}(g')({\mathcal{F}}(\beta)_{x''})\circ{\mathcal{F}}(\alpha)_{{\mathcal{F}}(h)(x'')}=
{\mathcal{F}}(\alpha)_{{\mathcal{F}}(h')(x'')}\circ{\mathcal{F}}(g)({\mathcal{F}}(\beta)_{x''})$ is the
diagonal of the right side of the cube, so the diagram
$$
\xymatrix@C=5em{
x\ar@{=}[d]\ar[r]^-{{\mathcal{F}}(g)(\varphi)\circ\psi} & {\mathcal{F}}(g){\mathcal{F}}(h)(x'')\ar[d]^{{\mathcal{F}}(\beta\circ\alpha)}
\\
x\ar[r]_-{{\mathcal{F}}(g')(\varphi')\circ\psi'}&{\mathcal{F}}(g'){\mathcal{F}}(h')(x'')
}
$$
commutes and defines the horizontal composition ${\mathcal{F}}(\beta\circ\alpha)$.

 \section{The Equivariant Fundamental Groupoid} \label{S:fg}

We will present tom Dieck's equivariant fundamental groupoid from \cite{tD}  from a categorical point of view, using the framework of the previous section.  

Fix a 
compact Lie group $G$.   When studying a $G$-space $X$, we often look at the diagram of its fixed sets $\{ X^H \} = \{ x \in X | hx = x \textup{ for all } h \in H \}$ for the closed subgroups $H \leq G$, with the inclusion maps between these spaces.  This is a diagram in the shape of the  orbit category $\orbit$  of $G$.  This category $\orbit$ has  objects given by  canonical orbits $G/H$ for a closed subgroup of $G$, 
with arrows defined by  $G$-equivariant maps between them.  Explicitly, an  equivariant map $ G/H \to G/K$  is defined by $H \to \alpha K$ for some $\alpha \in G$ such that $H \leq \alpha K \alpha^{-1}$;  then equivariance requires that  $gH \to g \alpha K$.  Two elements $\alpha, \beta\in G$ define the same map when $\alpha K   = \beta K$, so the map is defined by a coset $\alpha K$.  Thus  maps $G/H \to G/K$ are defined by elements $\alpha \in (G/K)^H$.  

A $G$-map $\xymatrix@1@C=1.5em{G/H\ar[r]^-x&X}$ is defined by $H \to x$ for some point $x \in X^H$;  then $gH \to gx$.  We think of such a map as defining a $G$-point in $X$, consisting of the point $x$ and its orbit $\{ gx\}$.   Then $X$ defines a contravariant functor $\Phi:  \orbit \to  {\sf Spaces}$:  $G/ H \to X^H$, and  if $\alpha:  G/H \to G/K$, then given $\xymatrix@1@C=1.5em{G/K\ar[r]^-x&X}$ we can compose and get  $\xymatrix@1@C=1.5em{G/H \ar[r]^-\alpha &G/K \ar[r]^-x & X}$.    This is defined by $H \to \alpha K \to \alpha x$ , so $x \circ \alpha $ is just given by the action $\alpha x$:  it is easy to confirm that if $\alpha \in (G/K)^H$ and $x \in X^K$, then $\alpha x \in X^H$ and the action agrees with the composition above.   In what follows, we will  use the group action notation  $\alpha x$ rather than the composition notation.

We can create equivariant invariants as diagrams indexed by $\orbit$ by applying topological invariants to the diagram of fixed sets.     Tom Dieck used this idea to create a category out of the fundamental groupoids of the fixed sets.  

\begin{defn}\label{D:td} ( \cite{tD}, Defn 10.7) Objects of $\pi_1(G, X)$ are $G$-maps $x:  G/H \to X$.  An arrow from $x:  G/H \to X$ to $y:  G/K \to X$ is a pair $(\alpha, [\gamma])$ consisting of a $G$-map $\alpha: G/H \to G/K$,  and a $G$-homotopy class (rel $G/H \times \partial I$) of $G$-paths $\gamma:  G/H \times I \to X$ such that $\gamma(0) = x$ and $\gamma(1) = \alpha y$.  

\end{defn}

We can interpret $\pi_1(G, X)$ as a  Grothendieck category as in  Definition \ref{groth}.   Define the functor   $\F$ from spaces into the category of groupoids:   $\F(G/H)  = \Pi(X^H) $ is the  fundamental groupoid of $X^H$.  Explicitly, $\F(G/H)$ is the  category  which has objects given by  equivariant maps $\xymatrix@1@C=1.5em{G/H\ar[r]^-x&X}$, or equivalently, points $x\in X^H$, and 
 arrows given by homotopy classes of paths in $X^H$. 
 
For $\alpha\colon G/H\rightarrow G/K$, we define a functor 
$\F(\alpha)\colon \F(G/K)\rightarrow\F(G/H)$. $\F(\alpha)$ is the functor $\Pi(X^K) \to \Pi(X^H)$ which sends an object $x \in X^K$
to $\alpha x \in X^H$, and a homotopy class of paths $[\gamma] $ in $X^K$ to
 $[\alpha \gamma] $ in $X^H$.    Then we define the Grothendieck category   $$ \semi \F  {{\orbit}\, }.$$   Explicitly, objects are pairs $(G/H,x)$ with $x\in X^H$,
and arrows are pairs $$(\alpha, [\gamma]) \colon (G/H, x)\to (G/K, y),$$
where $\alpha\colon G/H\rightarrow G/K$ and $[\gamma]$ is a homotopy class   (rel endpoints) of paths in $X^H$
from $x$ to $ \alpha y$, recovering  Definition \ref{D:td}.  
The resulting category is not a groupoid, but rather a category fibred in groupoids over $\orbit$. 

For non-discrete groups, tom Dieck also gives a version of the fundamental groupoid which accounts for the topology of $G$.  

\begin{defn}\label{D:tdD}  (\cite{tD}, Definition 10.9) Objects of $\pi_1^d(G, X)$ are again  $G$-maps $x:  G/H \to X$.  Morphisms are now given by equivalence classes of the morphisms of  Definition \ref{D:td},  where $(\alpha_0, [\gamma_0])$ is equivalent to $(\alpha_1, [\gamma_1])$ when there exists a $G$-homotopy $\sigma:  G/H \times I \to G/K$ between $\alpha_0$ and $\alpha_1$ and a $G$-homotopy $\Lambda:  G/H \times I \times I \to X$ such that 
\begin{align*} \Lambda (gH, 0, t) & =  x \\  \Lambda (gH, 1, t) & =  \sigma(t) y  \\  \Lambda (gH, s, 0 ) & =  \gamma_0 (s) 
\\  \Lambda (gH, s, 1 ) & =  \gamma_1 (s)  \\ \end{align*}
\end{defn}

To define $\pi_1^d(G, X)$ categorically,  we use Definition \ref{groth2},  a 2-categorical version of the Grothendieck category.  When $G$ is a compact Lie group,    $\orbit$  is a  2-category:  $\orbit[G/H,G/K]\cong(G/K)^H$ is a  topological space.  So given two arrows  $\alpha,\beta\colon G/H\rightarrow G/K$, we define a 
2-cell $[s] \colon \alpha\rightarrow \beta$  to be a homotopy  class (rel endpoints) of paths $[s]:  I\rightarrow (G/K)^H$ from $\alpha$ to $\beta$.
Then  the functor 
$\F$ defined above can be extended to 
a 2-functor $\F\colon \orbit\rightarrow\Cat$, where $\Cat$ is the 2-category of categories.    For $G/H \in \orbit$, $\F(G/H) = \Pi(X^H)$, the fundamental groupoid of $X^H$, and $F(\alpha)$ is the functor $\Pi(X^K) \to \Pi(X^H)$ as described above.  Given a $2$-cell in $\orbit$ defined by 
 a homotopy class $[\sigma]$ of paths from $\alpha$ to $\beta$ in $\orbit[G/H,G/K]$, we  define a natural transformation 
$\F([\sigma])\colon\F(\alpha)\rightarrow\F(\beta)$.    This natural transformation has components  
 $\F([\sigma])_x=\sigma x  $ for $x\colon G/K\rightarrow X$:  
  \[ \xymatrix{ \alpha  x \ar[d]_{ [\alpha\gamma]  }  \ar[rr]^{[\sigma x]} & &
{\beta x  }  \ar[d]^{ [\beta \gamma] }  \\
\alpha y \ar[rr]_{  [\sigma y] }  & &   \beta y }  \]
This is well-defined, since if  $\sigma'$ is homotopic to $\sigma$ in $(G/K)^H$, then the path defined by $\sigma' x$ is homotopic to $\sigma x$ in $X^H$.  The compositions are defined by $\sigma x * \beta \gamma$ and $ \alpha \gamma * \sigma y$, where $*$ denotes the usual concatenation of paths.  These can be written as  \begin{align*} \sigma x * \beta \gamma & = (\sigma * c_{\beta})  (c_x * \gamma)  \\   \alpha \gamma * \sigma y & = (c_{\alpha} * \sigma ) (\gamma * c_y)\end{align*}  where $c$ denotes the constant path at the respective point.  It is straightforward to show that these are homotopic by sliding $\sigma$ along from $0 \leq t \leq \frac12$ to $\frac 12 \leq t \leq 1$, and sliding $\gamma$ in the opposite way:  

 \[ \xymatrix{ \ar@{-}[d]  \ar@{-}[r]^{\sigma} \ar@{-}[dr] & \ar@{-}[r]^{c_{\beta}} \ar@{-}[dr]  &
 \ar@{-} [d]^{}  \\
\ar@{-} [r]^{c_\alpha} &\ar@{-} [r]^{\sigma} & }  \phantom{222w}  \xymatrix{ \ar@{-}[d]  \ar@{-}[r]^{c_x} &  \ar@{-}[dl] \ar@{-}[r]^{\gamma}  &\ar@{-}[dl] 
 \ar@{-} [d]^{}  \\
\ar@{-} [r]^{\gamma} &\ar@{-} [r]^{c_y} & }  \]
Therefore this defines a natural transformation as required.   


\begin{defn}  Apply  Definition \ref{groth2} to the 2-functor $\F\colon \orbit\rightarrow\Cat$.  This defines a Grothendieck 2-category:

$$\fg(X) = \semi \F  {{\orbit}\, }.$$

The category obtained by identifying all arrows connected by 2-cells will be denoted by $\fg^d(X)$.  

\end{defn}
The 2-category $\fg(X)$ and its quotient $\fg^d(X)$ will be our main objects of study from here on. We begin by explicitly describing $\fg(X)$.  Objects are pairs $(G/H,x)$ with $x\in X^H$,
and arrows are pairs $$(\alpha, [\gamma]) \colon (G/H, x)\to (G/K, y),$$
where $\alpha\colon G/H\rightarrow G/K$ and $[\gamma]$ is a homotopy  class   (rel endpoints) of paths in $X^H$
from $x$ to $ \alpha y$, as in the 1-category case.   

To describe 2-cells, let $(\alpha, [\gamma]), (\alpha', [\gamma']) \colon (G/H, x)\to (G/K, y)$ 
be maps in $\semi \F {{\orbit}\, }$.  A
$2$-cell $(\alpha, [\gamma])
\Rightarrow (\alpha', [\gamma'])$ is given by a homotopy class of paths $[\sigma]\colon  \alpha \to \alpha'$   in
$\orbit[G/H, G/K]$ such that the following diagram commutes in $\F(G/H)$:
\[ \xymatrix{ x  \ar@{=}[d]_{id_x }  \ar@{-}[rr]^{[\gamma]} & &
{\alpha y}  \ar@{-}[d]^{ [\sigma y]}  \\
x  \ar@{-}[rr]_{ [\gamma']}  & &   \alpha' y   \rlap{\quad.} }\]  
Arrows of  $\F(G/H)$ are  homotopy classes of paths in $X^H$,  and so 
this means that for any choice of representatives $\gamma$, $\gamma'$, and $\sigma$,
we have  that  the concatenation $\gamma * \sigma y$ is homotopic (rel endpoints) to $\gamma'$ in $X^H$.  
So there is a homotopy $\Lambda$ in $X^H$ that fills the following square
\begin{equation}\label{homotopy}
\xymatrix{
 x  \ar@{-}[dd]_{id_x }  \ar@{-}[rr]^{\gamma} 
\ar@{}[ddrr]|{\Lambda(\gamma,\gamma',\sigma)}& &
{\alpha y}  \ar@{-}[dd]^{\sigma y}  \\
 &  \\
x  \ar@{-}[rr]_{ \gamma'}  & &   \alpha'  y\rlap{\quad.}
}
\end{equation}

Then $\pi^d_1(G, X)$ from Definition \ref{D:tdD} is exactly the  quotient  of the 2-category  $\fg(X)$ by these $2$-cells.  So tom Dieck's discrete category is exactly $\Pi_G^d(X)$.  

For future use, we explicitly describe the horizontal composition of 2-cells for this category.  
Given a 2-cell $[\sigma]$ from $(\alpha, [\gamma])$ to $(\alpha', [\gamma'])$ and a 2-cell $[\omega]$ from $(\beta, [\zeta])$ to $(\beta', [\zeta'])$, we know that we have homotopies   \[ \xymatrix{ x \ar@{=}[d]  \ar[rr]^{\gamma}  & &
 \alpha y   \ar[d]^{ \sigma y}  \\
x  \ar[rr]_{\gamma'}  & &  \alpha' y  }  \phantom{ww} \textup{and} \phantom{wwww}
\xymatrix{ y \ar@{=}[d]  \ar[rr]^{\zeta}   & &
 \beta z  \ar[d]^{ \omega z}  \\
y  \ar[rr]_{\zeta'}  & &  \beta' z  \rlap{\quad.} }   \]    The composition of $[\sigma]$ and $[\omega]$  is defined by applying both $\alpha $ and $\alpha'$ to the second square, to get 
    \[ \xymatrix{ \alpha y \ar@{=}[d]  \ar[rr]^{ \alpha \zeta}  & &
 \alpha \beta z  \ar[d]^{ \alpha \omega z}  \\
\alpha y   \ar[rr]_{\alpha \zeta'}  & &  \alpha' y   }  \phantom{ww} \textup{and} \phantom{wwww}
\xymatrix{ \alpha' y \ar@{=}[d]  \ar[rr]^{\alpha' \zeta}   & &
\alpha' \beta z  \ar[d]^{ \alpha' \omega z}  \\
\alpha' y  \ar[rr]_{\alpha' \zeta'}  & &  \alpha' \beta' z  \rlap{\quad.} }   \]    

Then we form the diagram 
$$
\xymatrix{
	& x\ar@{=}[dl]\ar[r]^{\gamma}
	&  { \alpha \zeta} \ar[dl]_{\sigma y } \ar@{=}[dd]|\hole \ar[rr]^{\alpha \zeta} 
	&& \alpha \beta z \ar[dd]^{\alpha \omega z} \ar[dl]^{\sigma \beta z}
\\
x \ar[r]_{\gamma'} & \alpha' y \ar@{=}[dd] \ar[rr]^(.6){\alpha' \zeta} 
	&&\alpha' \beta z \ar[dd]^(.3){\alpha' \omega z}
\\
	&&\alpha y  \ar[dl]_{\sigma y} \ar[rr]|(.46)\hole_(.3){\alpha \zeta'} 
	&& \alpha \beta' z \ar[dl]^{\sigma \beta' z }
\\
	& \alpha' y  \ar[rr]_{\alpha ' \zeta' } && \alpha' \beta' z
}
$$
Define the composition by  $$ [\sigma][ \omega] = [S]= [\sigma \beta * \alpha' \omega] \simeq [\alpha \omega * \sigma \beta']. $$  This is a 2-cell from $(\alpha \beta, [\gamma * \alpha \zeta])$ to $(\alpha'\beta', [\gamma' * \alpha' \zeta'])$ as required, since we have a homotopy
 \[ \xymatrix{ x \ar@{=}[d]  \ar[rr]^{\gamma* \alpha \zeta}  & &
 \alpha \beta  z   \ar[d]^{ S z}  \\
x  \ar[rr]_{\gamma' * \alpha' \zeta'}  & &  \alpha'  \beta' z  \rlap{\quad.} } \]

\begin{eg} \label{E:s1}
Let $G = \mathbb{Z}/2 = \{ e, \tau\}$  and $X = S^1$ where $\tau$ acts on $X$ by reflection through a horizontal axis.  Then $X^G = \{E, W\}$, where $E$ and $W$ are the east and west cardinal points on the circle, and $X^e = S^1$.  Then the objects of $\Pi_G(X)$ consist of $(G/e, \lambda)$ for any $\lambda \in S^1$, and the points $(G/G, E)$ and $(G/G, W)$.      

In between the points  $(G/e, \lambda_1)$ and $(G/e, \lambda_2)$, there are arrows of type $(e, [\gamma])$ where $\gamma$ is a path in $S^1$ from $\lambda_1$ to $\lambda_2$; these correspond to winding numbers in  $\mathbb{Z}$.  Additionally, there are arrows of type $(\tau, [\zeta])$ where $\zeta$ is a path from $\lambda_1$ to $\tau \lambda_2$.    Together with the arrows of the first type, we get a $D_{\infty}$ worth of arrows between points of this type.   

There are also  arrows from $(G/e, \lambda)$ to $(G/G, E)$, again corresponding to winding numbers in $\mathbb{Z}$; and similarly to $(G/G, W)$.  
Since $G$ is discrete, there are no non-trivial 2-cells; so $\Pi^d_G(X) = \Pi_G(X) $ in this case.   A skeleton of this category is given below;  for more details, see \cite{BS}. 
\[ \xymatrix{ & (G/e, \lambda) \ar[dr]^{\mathbb{Z}} \ar[dl]_{\mathbb{Z}}\ar@(ul, ur)[]^{D_{\infty} }\\
(G/G,E)  & & (G/G,W) } \]
\end{eg}

\begin{eg} \label{E:t2}
Let $G = S^1$, the circle group, and $X= T^2 = S^1 \times S^1$, where the action is given by $\lambda(\alpha, \beta) = (\alpha, \lambda \beta)$.    Then the fixed set $X^H$ is empty for any $H \neq \{ e\}$, so the objects of $\Pi_G(X)$ are all of the form $(G/e, (\alpha, \beta))$.    Between objects $(G/e, (\alpha_1, \beta_1))$ and $(G/e, (\alpha_2, \beta_2))$, there are maps defined by $(\lambda, [\gamma])$  for $\lambda\in S^1$, where $\gamma$ is a path in $T^2$ from $(\alpha_1, \beta_1) $ to $(\alpha_2, \lambda \beta_2)$.   Again, we can classify $[\gamma]$ using winding numbers in the $\alpha$ and $\beta$ coordinates, so there are $S^1 \times ({\mathbb Z}\oplus {\mathbb Z}) $ worth of arrows  between any two points.  This time, however, $G$ has non-trivial topology, so many of these are connected by 2-cells.  In fact, we can look at the projection of $\gamma$ onto the $\beta$ coordinate and use this to define a path $\sigma$ in $S^1$.  Then we have a  2-cell from $(e, [\gamma])$ to $(e, [\gamma * \sigma^{-1} (\alpha_2, \beta_2)])$, showing that every arrow has a 2-cell connecting it to an arrow which has the same projection onto the first coordinate, but is constant in the second coordinate:

\[ \xymatrix{
 (\alpha_1, \beta_1)  \ar@{-}[d]_{id }  \ar@{-}[rr]^{\gamma} 
& &
{ (\alpha_2, \lambda \beta_2) }  \ar@{-}[d]^{\sigma^{-1} (\alpha_2, \beta_2)} \\
  (\alpha_1, \beta_1)  \ar@{-}[rr]_{ \gamma'}  & &  (\alpha_2, \beta_1) \rlap{\quad.}
}\]

\[\scalebox{.15}{\includegraphics{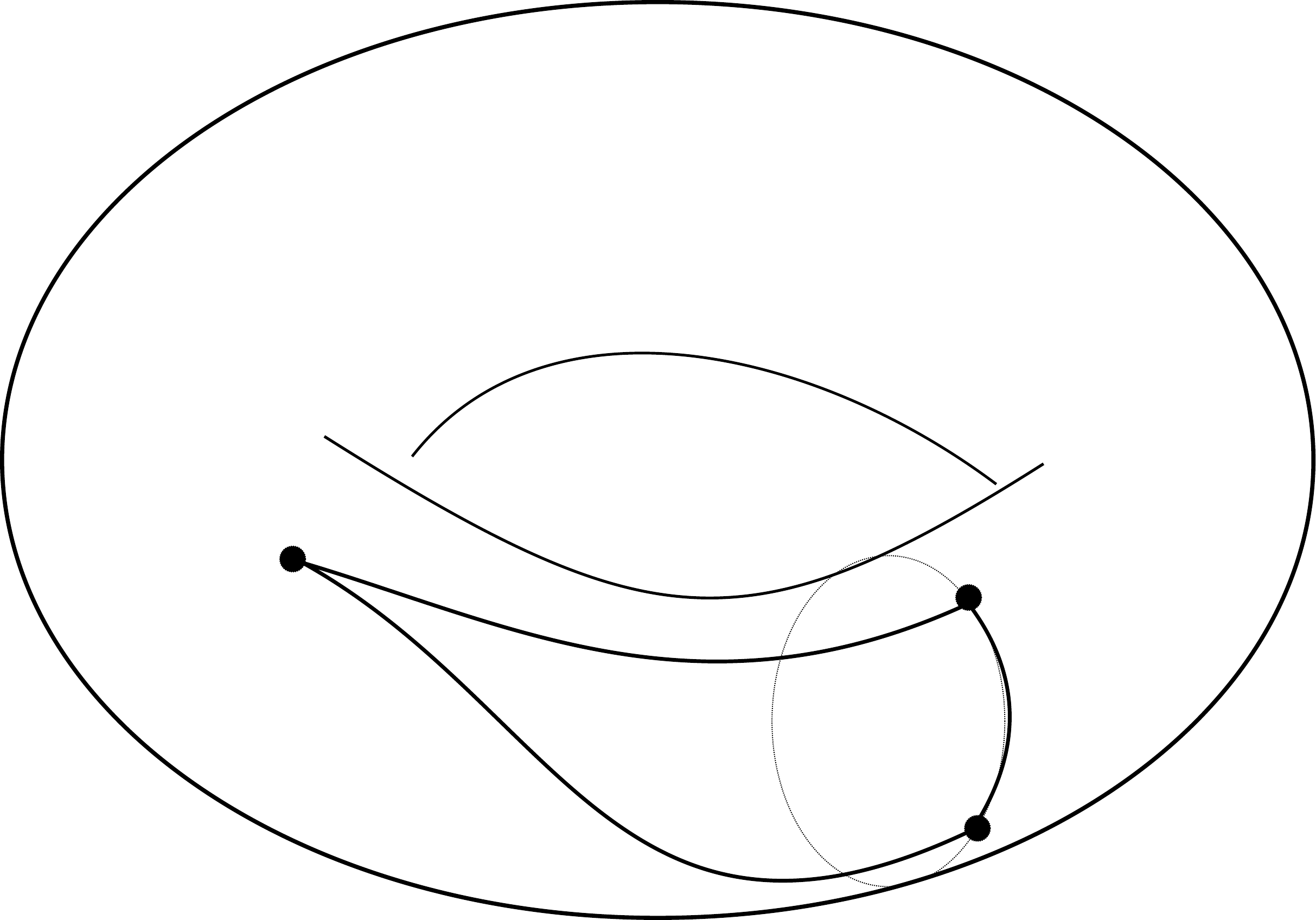}}\]
Thus two arrows are connected by 2-cells if the projections of the paths onto the first coordinates are homotopic, and $\Pi^d_G(X)$ has a  $\mathbb Z$ worth of paths between any two points, a category equivalent to the fundamental groupoid of the circle.  
\end{eg}

\begin{rmk}
Tom Dieck also defined a notion of an equivariant $\pi_0$, such that each $\pi_0(G,X)$ can be viewed as a Grothendieck category but this notion is not invariant under Morita equivalence.
\end{rmk}

\section{Functoriality of $\Pi_G(X)$}

In this section, we study the functoriality of the $\fg(X)$ construction.  We start with  the category of compact Lie groups acting smoothly on manifolds, with smooth equivariant maps between them.  In preparation for what follows,  we will consider a   $G$-space $X$ as represented by a smooth translation groupoid $G \ltimes X$.  
\begin{defn} \label{D:trans}
Let $G$ be a Lie group with a smooth  left action on a manifold $M$.
Then the translation groupoid (also called action groupoid)  $G\ltimes M$ is defined as follows.   
The space of objects is  $M$, and the space of arrows is defined to be   $G\times M$ with   the 
source of an arrow $(g, x)$ defined by  $s(g, x) = x$, and the target by using the action of $G$ 
on $M$, $t(g, x) = gx$.    So $(g, x)$ is an arrow from  $x $ to $ gx$.  
The units map is defined by  $u(x)=(e,x)$, where $e$ is the identity element in $G$, composition by 
 $(g',gx)\circ(g,x)=(g'g,x)$, and inverses by $(x, g)^{-1} = (gx, g^{-1})$.  
\end{defn}

\begin{defn} \label{D:transmap}
An {\em equivariant map} $G\ltimes X\rightarrow H\ltimes Y$ between translation groupoids is a pair $(\varphi,f)$, 
where $\varphi\colon G\rightarrow H$
is a group homomorphism and $f\colon X\rightarrow Y$ is a $\varphi$-equivariant smooth map, 
{\it i.e.}, $f(gx)=\varphi(g) f(x)$ 
for $g\in G$ and $x\in X$.
\end{defn}

Definition \ref{D:transmap}  gives a smooth functor between the topological groupoids of Definition \ref{D:trans}.  
We want to consider the 2-category of translation groupoids, with 2-cells given by smooth natural transformations.  
Suppose that $r$ is a natural transformation of smooth groupoids,
$r\colon(\varphi_1,f_1)\Rightarrow(\varphi_2,f_2)\colon G_1\ltimes X_1\rightrightarrows G_2\ltimes X_2$.  Then $r$ is  defined by a smooth map $r\colon X_1\rightarrow G_2 \times X_2$ such that $r(x):  f_1(x) \to f_2(x)$ for all $x\in X_1$,  satisfying the naturality condition.  So  $r$ is determined by a map $r:  X_1 \to G_2$ such that $r(x) = r_x$ satisfies $r_xf_1(x) = f   _2(x)$.   The naturality condition requires that the diagram commutes in $G_2 \ltimes X_2$:
 \[ \xymatrix{ f_1(x)  \ar[d]_{\varphi(g)} \ar[rr]^{r_x} & &
f_2(x)   \ar[d]^{\varphi_2(g)}  \\
f_1(gx) = \varphi_1(g)f_1(x)   \ar[rr]_{ r_{gx}}  & &  f_2(gx) = \varphi_2(g) f_2(x) \rlap{\quad.} }\]   Thus, $r_{gx}\varphi_1(g) = \varphi_2(g) r_x$ for  all $g \in G_1$ and $x \in X_1$.

We will denote the 2-category of smooth translation groupoids,  equivariant maps and natural transformations 
by {\sf EqTrGpd}.    Our goal in this section is to show that $\Pi_G(X)$ is functorial with respect to this category.  
We start by considering the effect of a map as in Definition \ref{D:transmap} on the orbit category.  

\begin{lemma}\label{L:orbit}  If $\varphi:  G_1 \to G_2$ is a group homomorphism, then we have a functor $\orbita \to \orbitb$ defined by $\varphi(G_1/H) = G_2/\varphi(H)$, and $\varphi(\alpha)$ for $\alpha:  G_1/H \to G_1/K$.  

\end{lemma}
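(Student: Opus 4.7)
The plan is to build the induced $2$-functor $\orbita \to \orbitb$ by specifying its action on objects, $1$-morphisms, and $2$-morphisms in turn, and then verifying well-definedness at each level together with the usual functoriality axioms. Since the preceding section upgraded $\orbit$ to a $2$-category, the claim is really that $\varphi$ induces a $2$-functor; the only genuine subtlety is ensuring that $\varphi(H)$ is a closed subgroup of $G_2$, which is where the compactness hypothesis on the Lie groups enters.

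On objects I would send $G_1/H$ to $G_2/\varphi(H)$. Since $G_1$ is compact, any closed subgroup $H$ is compact, hence $\varphi(H)$ is compact and therefore closed in $G_2$, so $G_2/\varphi(H)$ is a valid object of $\orbitb$. On $1$-morphisms, recall that a map $\alpha\colon G_1/H\to G_1/K$ is represented by an element $\alpha\in G_1$ with $\alpha^{-1}H\alpha\leq K$, the representative being determined only up to right multiplication by elements of $K$. Applying $\varphi$ yields $\varphi(\alpha)^{-1}\varphi(H)\varphi(\alpha)\leq\varphi(K)$, so $\varphi(\alpha)$ determines an element of $(G_2/\varphi(K))^{\varphi(H)}$; and if $\alpha K=\beta K$ then $\alpha=\beta k$ for some $k\in K$, whence $\varphi(\alpha)\varphi(K)=\varphi(\beta)\varphi(K)$, so the assignment descends cleanly to cosets.

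For $2$-morphisms I would send a homotopy class $[\sigma]$ of paths in $(G_1/K)^H$ from $\alpha$ to $\beta$ to the class of the composite path $\varphi\circ\sigma$ in $(G_2/\varphi(K))^{\varphi(H)}$; continuity of $\varphi$ ensures this is indeed a path, and it carries continuous homotopies to continuous homotopies, so the map is well-defined on classes. Functoriality then follows almost immediately from the corresponding properties of $\varphi$: since $\varphi(e)=e$ the identity coset $eK$ maps to $e\varphi(K)$, and composition in $\orbita$ is given by multiplication in $G_1$ (the composite of $H\mapsto\alpha K$ with $K\mapsto\beta L$ sends $H\mapsto\alpha\beta L$), so the homomorphism property $\varphi(\alpha\beta)=\varphi(\alpha)\varphi(\beta)$ delivers compatibility with composition at both the $1$- and $2$-cell levels. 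The main obstacle, as indicated, is really nothing more than verifying closedness of $\varphi(H)$; everything else reduces to routine bookkeeping with cosets and continuity.
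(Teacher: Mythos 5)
Your proof is correct and takes essentially the same approach as the paper: the one substantive check in both is that $H \leq \alpha K \alpha^{-1}$ implies $\varphi(H) \leq \varphi(\alpha)\varphi(K)\varphi(\alpha)^{-1}$, so that $\varphi(\alpha)\varphi(K)$ defines a legitimate arrow $G_2/\varphi(H) \to G_2/\varphi(K)$. The extra details you supply --- independence of the coset representative, closedness of $\varphi(H)$ via compactness, and the action on $2$-cells --- are all correct, but the paper either omits them as routine or defers them (the $2$-cell level is handled in the proof of Proposition \ref{Pi-functor}).
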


\begin{proof}  The map $\alpha$ is defined by a coset $\alpha K$ where $H \subseteq \alpha K \alpha^{-1}$.  Then $\varphi(\alpha) \varphi(K)$ defines a coset and $\varphi(H) \subseteq \varphi(\alpha)\varphi( K ) \varphi(\alpha)^{-1}$.

\end{proof}
Next, we verify that an equivariant map respects fixed sets.  

\begin{lemma}  \label{L:fix}
 Let $(\varphi,f)\colon G_1\ltimes X_1\rightarrow G_2\ltimes X_2$ be a homomorphism of translation groupoids.  If  $x \in X_1^H$ for $H \leq G_1$,  then $f(x) \in X_2^{\varphi(H)}$. 
\end{lemma}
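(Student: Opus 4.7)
The plan is to prove this by directly unwinding the definitions of fixed set and $\varphi$-equivariance; no homotopy-theoretic or groupoid-theoretic machinery is needed, since the statement is essentially a restatement of what it means for $f$ to intertwine the two group actions.

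First, I would recall that $x \in X_1^H$ means $hx = x$ for every $h \in H$, and that $f(x) \in X_2^{\varphi(H)}$ means $k f(x) = f(x)$ for every $k \in \varphi(H)$. So the task reduces to verifying this condition on elements of $\varphi(H)$.

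Next, I would fix an arbitrary $k \in \varphi(H)$ and write $k = \varphi(h)$ for some $h \in H$. Applying the $\varphi$-equivariance identity from Definition \ref{D:transmap}, we have $f(hx) = \varphi(h) f(x) = k f(x)$. Since $h \in H$ fixes $x$, the left-hand side is just $f(x)$, yielding $f(x) = k f(x)$. As $k$ was arbitrary in $\varphi(H)$, this shows $f(x) \in X_2^{\varphi(H)}$.

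There is no real obstacle here; the only subtlety worth a sentence of care is that the fixed set $X_2^{\varphi(H)}$ is defined in terms of all elements of the subgroup $\varphi(H) \leq G_2$, not just the image of a chosen set of generators, but since every element of $\varphi(H)$ is of the form $\varphi(h)$ by definition of the image subgroup, this is automatic.
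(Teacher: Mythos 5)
Your proof is correct and follows essentially the same route as the paper's: both simply apply the $\varphi$-equivariance identity $f(hx)=\varphi(h)f(x)$ together with $hx=x$ to conclude $\varphi(h)f(x)=f(x)$ for all $h\in H$. Your extra remark that every element of $\varphi(H)$ has the form $\varphi(h)$ is a harmless clarification that the paper leaves implicit.
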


\begin{proof}   Suppose that $x \in X_1^H$, so $hx = x$ for all $h \in H$.  Then $f(hx) = f(x)$, and so $\varphi(h) f(x) = f(x)$.  So $f(x) \in X_2^{\varphi(H)}$ as claimed.  

\end{proof}

Now we show that $\fg(X)$ is functorial with respect to the maps of Definition \ref{D:transmap}.

\begin{prop}\label{Pi-functor}   Let $(\varphi,f)\colon G_1\ltimes X_1\rightarrow G_2\ltimes X_2$.  Then we get an induced functor between the 2-categories $\Pi(\varphi,f):     \Pi_{G_1}(X_1)\rightarrow\Pi_{G_2}(X_2)$   \end{prop}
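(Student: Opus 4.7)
\noindent\textbf{Proof plan for Proposition \ref{Pi-functor}.}

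The strategy is to build $\Pi(\varphi,f)$ level by level on objects, 1-cells, and 2-cells of the Grothendieck 2-category $\fg(X) = \semi\F{\orbit}$, reusing the group-theoretic functor $\varphi\colon \orbita\to\orbitb$ from Lemma~\ref{L:orbit} in the base and the pointwise action of $f$ on paths in the fibres, then verify 2-functoriality.

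\emph{Construction.} On objects, set $\Pi(\varphi,f)(G_1/H, x) := (G_2/\varphi(H), f(x))$, which is well-defined by Lemma \ref{L:fix}. On 1-cells, send $(\alpha,[\gamma])\colon (G_1/H,x)\to(G_1/K,y)$ to $(\varphi(\alpha),[f\circ\gamma])$; the path $f\circ\gamma$ takes values in $X_2^{\varphi(H)}$ by applying Lemma \ref{L:fix} pointwise, and its endpoints are $f(x)$ and $f(\alpha y)=\varphi(\alpha)f(y)$ by $\varphi$-equivariance of $f$. On 2-cells, a class $[\sigma]\colon(\alpha,[\gamma])\Rightarrow(\alpha',[\gamma'])$ is a homotopy class of paths in $\orbita[G_1/H,G_1/K]\cong(G_1/K)^H$, and I map it to the class of the path $t\mapsto \varphi(\sigma(t))$, viewed in $\orbitb[G_2/\varphi(H),G_2/\varphi(K)]\cong(G_2/\varphi(K))^{\varphi(H)}$. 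Continuity of $\varphi$ and Lemma \ref{L:orbit} ensure this lands in the correct hom-space, and well-definedness on homotopy classes is automatic.

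\emph{Compatibility and 2-functoriality.} The key verification is that the commuting square \eqref{homotopy} defining a 2-cell in $\fg(X_1)$ transports to the analogous square in $\fg(X_2)$. Given a filling homotopy $\Lambda\colon I\times I\to X_1^H$ between $\gamma*\sigma y$ and $\gamma'$, the composite $f\circ\Lambda$ has values in $X_2^{\varphi(H)}$ (Lemma \ref{L:fix} pointwise) and gives a homotopy between $f\gamma* (\varphi\sigma)f(y)$ and $f\gamma'$, which is exactly the data required. After this, one checks:
\begin{itemize}
\item identities: $\varphi$ sends $\mathrm{id}_{G_1/H}$ to $\mathrm{id}_{G_2/\varphi(H)}$ (Lemma \ref{L:orbit}) and $f$ sends the constant path at $x$ to the constant path at $f(x)$;
\item composition of 1-cells: the composite in $\fg(X_1)$ is $(\alpha'\alpha,[\gamma*\alpha\gamma'])$, whose image is $(\varphi(\alpha')\varphi(\alpha),[f\gamma* f(\alpha\gamma')])$, and this agrees with the composite of images because $\varphi$-equivariance gives $f(\alpha\gamma')=\varphi(\alpha)(f\gamma')$;
\item vertical composition of 2-cells: follows from $\varphi(\sigma_2 * \sigma_1) = \varphi(\sigma_2)*\varphi(\sigma_1)$ and functoriality of $f$ on path concatenation;
\item horizontal composition: using the explicit formula $[\sigma][\omega] = [\sigma\beta * \alpha'\omega]$ derived just before Example \ref{E:s1}, the image computes to $[\varphi(\sigma)\varphi(\beta)*\varphi(\alpha')\varphi(\omega)]$, which matches the horizontal composite of the images via $\varphi$-equivariance of $f$ applied to the translates $\sigma\beta$, $\alpha'\omega$ acting on~$f(z)$.
\end{itemize}

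\emph{Main obstacle.} The conceptual steps are straightforward, but the most delicate point is checking that the continuous (or smooth) path $\varphi\circ\sigma$ obtained in the orbit category of $G_2$ really has image in the subspace $(G_2/\varphi(K))^{\varphi(H)}$, because the map $\orbita\to\orbitb$ of Lemma \ref{L:orbit} is not given by naively applying $\varphi$ to a coset representative but rather by the induced map on coset spaces; one must verify that the factorisation $G_1\to G_2\to G_2/\varphi(K)$ of the representing map is well-defined on $(G_1/K)^H$ and lands in the $\varphi(H)$-fixed part. Once this is confirmed, the compatibility of the 2-cell square under $f$ is the other substantive check, and the remaining functoriality identities reduce to $\varphi$-equivariance of $f$ together with functoriality of path concatenation.
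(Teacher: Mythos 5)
Your construction is exactly the one the paper uses: $(G_1/H,x)\mapsto (G_2/\varphi(H),f(x))$ on objects via Lemma \ref{L:fix}, $(\alpha,[\gamma])\mapsto(\varphi(\alpha),[f\gamma])$ on arrows using $\varphi$-equivariance for the endpoint $f(\alpha y)=\varphi(\alpha)f(y)$, and $[\sigma]\mapsto[\varphi(\sigma)]$ on 2-cells with $f\circ\Lambda$ transporting the filling homotopy. Your additional explicit checklist of the 2-functoriality identities and your care about $\varphi(\sigma)$ landing in $(G_2/\varphi(K))^{\varphi(H)}$ go slightly beyond what the paper writes down, but the argument is the same.
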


\begin{proof}    We have shown in Lemma \ref{L:orbit} that $G_1/H \to G_2/\varphi(H)$ defines a functor from \orbita to \orbitb.     We define a functor $F = \Pi(\varphi,f):   \Pi_{G_1}(X_1)\rightarrow\Pi_{G_2}(X_2)$  as follows:

\noindent{\bf Objects:}
$F(G_1/H,x) = (G_2/\varphi(H),f(x))$.     If  $x\in X_1^H$, then Lemma \ref{L:fix} shows that $f(x)\in X_2^{\varphi(H)}$, so this is well-defined.  

\noindent {\bf Arrows:}
 If $(\alpha,[\gamma])\colon (G_1/H,x)\rightarrow(G_1/K,y)$ is an arrow in $\Pi_{G_1}(X_1)$, then $\alpha:  G_1/H \to G_1/K$ and $[\gamma]$ is a homotopy class of paths $x \to \alpha y$ in $X^H$.  Then $f[\gamma] = [f\gamma]$  defines a homotopy class of paths from $f(x)$ to $f(\alpha y)  =  \varphi(\alpha) f(y)$, defining an arrow $F(\alpha,[\gamma]):  
(G_2/\varphi(H),f(x))\rightarrow(G_2/\varphi(K),f(y))$ in $\Pi_{G_2}(X_2)$.

\noindent{\bf 2-cells:}   Suppose that $[\sigma]$ is a homotopy class of maps  $G_1 /H \to G_1/K$, given by    $ \sigma:  I \to (G_1/H)^K$ from $\alpha$ to $\beta$ such that   the following commutes:  \[ \xymatrix{ x  \ar@{=}[d]_{id_x }  \ar@{-}[rr]^{[\gamma]} & &
{\alpha y}  \ar@{-}[d]^{ [\sigma y]}  \\
x  \ar@{-}[rr]_{ [\gamma']}  & &   \alpha' y   \rlap{\quad.} }\]  So we have a homotopy $\Lambda$ from $\gamma * \sigma y$ to $\gamma'$.  
 Define $F([\sigma)]=[\varphi(\sigma)]:  \varphi(\alpha) \to \varphi(\beta)$.  This gives a homotopy class of paths in  $(G_2/\varphi(H))^{\varphi(K)}$, and $f(\Lambda)$  provides a homotopy from $f(\gamma) * \varphi(\sigma) f(y)$ to $f(\gamma')$  making the following diagram commute:  
 \[ \xymatrix{ f(x)  \ar@{=}[d]_{id_{f(x)} }  \ar@{-}[rr]^{[\gamma]} & &
{\varphi(\alpha ) f(y)}  \ar@{-}[d]^{ [\varphi(\sigma) f(y)]}  \\
f(x)  \ar@{-}[rr]_{ [\gamma']}  & &  \varphi( \alpha') f(y)   \rlap{\quad.} }\]  
\end{proof}

Lastly, we consider the effect of the 2-cells of {\sf EqTrGpd}.  
\begin{prop} \label{P:pnt} Let $r$ be a natural transformation of smooth groupoids,
$r\colon(\varphi_1,f_1)\Rightarrow(\varphi_2,f_2)\colon G_1\ltimes X_1\rightrightarrows G_2\ltimes X_2$.  Then we have an induced pseudo natural transformation $\Pi(r)\colon \Pi(\varphi_1,f_1)\Rightarrow\Pi(\varphi_2,f_2)$.
\end{prop}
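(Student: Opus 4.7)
The plan is to define $\Pi(r)$ as a pseudo natural transformation in three steps: give the 1-cell components on objects, give the invertible 2-cell components on 1-cells, and verify the pseudo naturality axioms.

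\textbf{1-cell components.} On an object $(G_1/H,x)$ the two images are $(G_2/\varphi_i(H), f_i(x))$ for $i=1,2$. The component formula $r_xf_1(x)=f_2(x)$ gives $r_x^{-1}f_2(x)=f_1(x)$, while the naturality identity $r_{hx}\varphi_1(h)=\varphi_2(h)r_x$ specialized to $hx=x$ for $h\in H$ yields $r_x\varphi_1(H)r_x^{-1}=\varphi_2(H)$. Thus $r_x^{-1}$ represents a $G_2$-equivariant map $\alpha_x\colon G_2/\varphi_1(H)\to G_2/\varphi_2(H)$ satisfying $\alpha_x\cdot f_2(x)=f_1(x)$, and I set
$$\Pi(r)_{(G_1/H,x)}:=\bigl(\alpha_x,\;[c_{f_1(x)}]\bigr),$$
where $c_{f_1(x)}$ is the constant path.

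\textbf{2-cell components.} For a 1-cell $(\alpha,[\gamma])\colon (G_1/H,x)\to(G_1/K,y)$, unravelling Definition \ref{groth}, the two composites around the naturality square are
$$\Pi(\varphi_2,f_2)(\alpha,[\gamma])\circ\Pi(r)_{(G_1/H,x)}=\bigl(\varphi_2(\alpha)\circ\alpha_x,\;[r_x^{-1}\cdot f_2\gamma]\bigr),$$
$$\Pi(r)_{(G_1/K,y)}\circ\Pi(\varphi_1,f_1)(\alpha,[\gamma])=\bigl(\alpha_y\circ\varphi_1(\alpha),\;[f_1\gamma]\bigr),$$
with orbit maps represented by $r_x^{-1}\varphi_2(\alpha)$ and $\varphi_1(\alpha)r_y^{-1}$ respectively. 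Naturality of $r$ at $\alpha y$ gives $\varphi_1(\alpha)r_y^{-1}=r_{\alpha y}^{-1}\varphi_2(\alpha)$, so the path
$$\sigma(t):=r_{\gamma(t)}^{-1}\varphi_2(\alpha)\varphi_2(K)$$
interpolates between the two endpoints; it lies in $(G_2/\varphi_2(K))^{\varphi_1(H)}$ exactly because $\gamma(t)\in X_1^H$ and the conjugation identity above holds at every $\gamma(t)$. Take $[\sigma]$ as the candidate 2-cell. Its compatibility with the two path data is witnessed by the square $H(s,t)=r_{\gamma(s)}^{-1}f_2(\gamma(t))$ in $X_2^{\varphi_1(H)}$: the diagonal is $f_1\gamma$, while going first along $s=0$ and then along $t=1$ is precisely the concatenation $(r_x^{-1}f_2\gamma)*(\sigma\cdot f_2(y))$, and the two are homotopic through the interior of the square. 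Invertibility of $[\sigma]$ on homotopy classes is automatic from path reversal.

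\textbf{Axioms.} It remains to verify pseudo naturality with respect to 2-cells in $\Pi_{G_1}(X_1)$, and compatibility with composition of 1-cells and with identities. Naturality in source 2-cells $[\tau]\colon(\alpha,[\gamma])\Rightarrow(\alpha',[\gamma'])$ reduces to sliding $\sigma$ built on $\gamma$ to $\sigma$ built on $\gamma'$ through a filler obtained by precomposing $r$ with the square that witnesses $[\tau]$. The identity axiom collapses to the constant-path observation $\sigma\equiv r_x^{-1}$ when $\alpha=\mathrm{id}$ and $\gamma=c_x$. The main obstacle is the composition axiom: for composable 1-cells $(\alpha,[\gamma])$ and $(\beta,[\delta])$, one must show that the $\sigma$ built from the composite $\gamma*\alpha\delta$ agrees, under the horizontal composition formula for 2-cells given explicitly in Section 2, with the pasting of the two individual 2-cells. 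Both sides are homotopy classes of paths in $G_2$ assembled from $r$ along $\gamma$ and along $\alpha\delta$, and their equality follows from reparameterization of concatenations combined with the naturality identity $r_{\alpha z}\varphi_1(\alpha)=\varphi_2(\alpha)r_z$ applied pointwise along $\delta$.
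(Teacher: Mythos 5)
Your proposal is correct and follows essentially the same route as the paper: the same object components $(r_x^{-1},[c_{f_1(x)}])$ justified by the conjugation identity $r_x\varphi_1(H)r_x^{-1}=\varphi_2(H)$, the same observation that the two composites around the naturality square differ, and the same pseudo-naturality $2$-cell $t\mapsto r_{\gamma(t)}^{-1}\varphi_2(\alpha)$ (yours is the reverse orientation of the paper's $r_{\gamma(1-t)}^{-1}\varphi_2(\alpha)$, which is immaterial since the $2$-cell is invertible), with your square $H(s,t)=r_{\gamma(s)}^{-1}f_2(\gamma(t))$ being a tidy repackaging of the paper's collapse-and-expand homotopy. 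Your treatment of the coherence axioms is a sketch at about the same level as the paper's main text, which likewise defers the detailed calculation to an appendix.
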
 

\begin{proof}

Given a natural transformation  $r$, we will define  a map from $\Pi_{G_1}(X_1)_0 \to  \Pi_{G_2}(X_2)_1 $  such that an object  $(G/H, x)$ corresponds to an arrow  $(\beta, [\gamma]) $ where $\gamma$ is a path from $ f_1(x)  $ to $ \beta f_2(x)$ in $X^{\varphi(H)}_2$.      
First   we claim that $\varphi_1(H) = r_x^{-1}\varphi_2(H) r_x$:  we know that  $r_{hx} \varphi_1(h) = \varphi_2(h) r_x$.  But since $x \in X^H$, $hx =x$ and so $r_{x} \varphi_1(h) = \varphi_2(h) r_x$, and hence $\varphi_1(h) = r_x^{-1} \varphi(h) r_x$.   Therefore $r^{-1}_x$ defines a map $G_2/\varphi_1(H) \to G_2/\varphi_2(H)$ in $\orbitb$. 

So now define the components of the pseudo natural transformation:   to every point $(G/H, x) $  we assign the arrow  $$(r_x^{-1}, [c_{f_1(x)}]) :  (G_2/\varphi_1(H), f_1(x)) \to (G_2/\varphi_2(H), f_2(x))$$ where $c_{f_1(x)}$ is the constant path from $f_1(x)$ to $r_x^{-1}f_2(x) = f_1(x)$.  Now let  $(\alpha, [\gamma]):  (G_1/H, x) \to (G_1/K, y)  $ be an arrow of $\Pi_{G_1}(X_1)$, and consider the naturality square  \[ \xymatrix{ f_1(x)  \ar[d]_{(\varphi(\alpha), [f_1(\gamma)])} \ar[rr]^{(r_x^{-1}, [c_{f_1(x)}])} & &
f_2(x)   \ar[d]^{(\varphi_2(\alpha), [f_2(\gamma)])}  \\
f_1(gx)    \ar[rr]_{ (r_y^{-1}, [c_{f_1(y)}]) }  & &  f_2(y)   \rlap{\quad.} }\]     Comparing the two compositions, we see that going one way we have $$(r_x^{-1}\varphi_2(\alpha), [c_{f_1(x)}* r_x^{-1}  f_2(\gamma) ] ) =(r_x^{-1}\varphi_2(\alpha), [ r_x^{-1}f_2(\gamma)])$$   Going the other way gives  $$(\varphi_1(\alpha) r_y^{-1}, [f_1(\gamma) *c_{f_1(y)}] ) =(\varphi_1(\alpha) r_y^{-1}, [f_1(\gamma)] ).$$ These are not equal, so this is not a strict natural transformation.  

To create the pseudo natural transformation,  to every morphism $(\alpha, [\gamma])$ we need to assign a 2-cell $[s]:  \varphi_1(\alpha) r_y^{-1} \to r_x^{-1} \varphi_2(\alpha)$ which will fill in the diagram
 \[ \xymatrix{ f_1(x) \ar@{=}[d]  \ar[rr]^{f_1(\gamma)}  & &
\varphi_1(\alpha) r_y^{-1} f_2(y)   \\
f_1(x)  \ar[rr]_{r^{-1}_x f_2(\gamma)\phantom{ww}}  & &  r_x^{-1} \varphi_2(\alpha) f_2(y)  }  \] and satisfy the required coherence properties.   We will leverage the fact that we know that $f_1(\gamma) = r^{-1}_{\gamma} f_2 (\gamma)$.  So we are trying to compare the following:  
 \[ \xymatrix{ f_1(x) \ar@{=}[d]  \ar[rr]^{r^{-1}_{\gamma} f_2(\gamma) \phantom{wwwwW}}  & &
   r^{-1}_y \varphi_2(\alpha) f_2(y)   \\
f_1(x)  \ar[rr]_{r^{-1}_x f_2(\gamma)\phantom{ww}}  & &  r_x^{-1} \varphi_2(\alpha) f_2(y)  }  \] 
 where across the top  the adjustment elements $r_{\gamma}$ are changing while  the single adjustment element $r_x^{-1}$ has been used along the bottom.  Written in this form, it becomes evident that we can define a 2-cell  $$s(\alpha, [\gamma])(t) = s(t)  = r^{-1}_{\gamma(t-1)} \varphi_2(\alpha).$$     We can define a homotopy between $f_1(\gamma) * s(t) f_2(y) $ and $r_x^{-1}f_2(\gamma) $ by writing the first map as \begin{align*} f_1(\gamma) * s(t) f_2(y) & = \left( r^{-1}_{\gamma(t) } f_2(\gamma) \right) * \left( r^{-1} _{\gamma(1-t) }\varphi_2(\alpha) f_2(y)  \right) \\ & = \left( r^{-1}_{\gamma(t)} * r^{-1} _{\gamma(1-t)} \right) \left( f_2(\gamma) * c_{f_2(\alpha y)} \right) \end{align*} Then it is easy to write a homotopy collapsing the first part to the constant map $r^{-1}_x$ and expanding the second  to $f_2(\gamma)$:

\[ \xymatrix{ \ar@{-}[d]  \ar@{-}[r]^{r^{-1}_{\gamma(t)} } \ar@{-}[dr] & \ar@{-}[r]^{r^{-1}_{\gamma(1-t)}}  &\ar@{-}[dl] 
 \ar@{-} [d]^{}  \\
\ar@{-} [r]_{c_x} &\ar@{-} [r]_{c_x} & }  \phantom{222w}  \xymatrix{ \ar@{-}[d]  \ar@{-}[r]^{f_1(\gamma)} &  \ar@{-}[dr] \ar@{-}[r]^{c_{f_2(\alpha y)} }  &
 \ar@{-} [d]^{}  \\
\ar@{-} [rr]_{f_1(\gamma)} &  & }  \]     
Coherence conditions can be verified by a straightforward calculation.

\end{proof}

\section{The Fundamental Category as an Orbifold Invariant}   \label{S:trans}

The classical definition of orbifolds  is a  generalization of the definition of 
manifolds based on charts and atlases, where the local neighbourhoods 
are homeomorphic to $U = \U/G$ where $G$ is a finite group acting on an open set 
$\U \subseteq {\mathbb R}^n$. Note that we will not  require that $G$ acts effectively on $\U$.
An orbifold can then be defined in terms of an 
 orbifold atlas, which is a locally compatible family of charts $(\U, G)$ such that the sets 
$\U/G$ give a cover of $M$.    The usual notion of equivalence of atlases through 
common refinement is used; details can be found in \cite{sa56,sa57} for the effective case and in \cite{PST} for the more general case.

Working with orbifold atlases is cumbersome, and an alternate way of representing orbifolds using groupoids 
has been developed.  It was shown in \cite{MP1} that every smooth orbifold can be represented 
by a Lie groupoid, which is determined up to essential equivalence.   
We are interested in representing orbifolds by a particular type of  Lie groupoid:  translation  groupoids $G \ltimes M$ coming from the smooth  action of a compact Lie group $G$ 
on a manifold $M$ as in Definition \ref{D:trans}, where all of the isotropy groups are finite.   Many, possibly  all,  orbifolds can be represented this way.
Satake showed  that every effective orbifold can be represented in this way \cite{sa57}.
His proof does not hold for non-effective orbifolds, but a partial result was obtained by  Henriques and Metzler \cite{HM}; 
their Corollary 5.6 shows that  all 
orbifolds for which all the ineffective isotropy groups have trivial centers are representable.   
It is conjectured that all orbifolds are representable, but this is currently unknown.  

For  this paper, we restrict our attention to those orbifolds 
that are representable, so that we can work with their translation groupoids.    In what follows, we think 
 of  a groupoid of  {\sf EqTrGpd}  as a representation of its underlying quotient space, 
encoding this orbifold and its  singularity types.  However, this representation is not unique;  
the same orbifold can be represented by different groupoids.  To represent orbifolds,  the  equivalence relation on groupoids is generated by the notion of essential equivalence 
for categories internal to the category of smooth manifolds:   an equivalence of categories that respects the topology on the groupoids.  It is shown in \cite{PS} that for representable orbifolds, we only need to look at equivariant essential equivalences, that is, essential equivalences of the form of Definition \ref{D:transmap}.   Thus we can describe the category of representable orbifolds as a bicategory of fractions  $\mbox{\sf EqTrGpd}(W^{-1})$, where 
the objects of this category are  Lie translation groupoids with finite isotropy groups, 
and a  morphism ${\mathcal G}\rightarrow{\mathcal H}$ is a span of homomorphisms 
$$\xymatrix{{\mathcal G}&{\mathcal K}\ar[l]_\omega \ar[r]^\varphi & {\mathcal H}},$$
where $\omega$ is an equivariant essential equivalence.   Such morphisms are also called {\em generalized maps}.  
We can think of this as replacing  the source groupoid $\mathcal{G}$ with an essentially equivalent groupoid  $\mathcal{K}$ and then using the new representative $\mathcal{K}$ to define our map.      

The main results of this paper are:  

\begin{thm} \label{T:maineq}
For every equivariant essential equivalence  of smooth translation groupoids
$$
\varphi\colon G\ltimes X\to H\ltimes Y
$$
the induced functor between the equivariant fundamental groupoids,
$$
\Pi(\varphi)\colon \Pi_G(X)\to\Pi_H(Y)
$$
is a weak equivalence of 2-categories.
\end{thm}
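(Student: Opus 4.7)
The plan is to reduce to the two elementary generators of equivariant essential equivalences identified in \cite{PS}, Proposition 3.5:
(i) the quotient $G\ltimes X \to (G/K)\ltimes(X/K)$ by a normal subgroup $K$ acting freely on $X$, and
(ii) the extension $G\ltimes X \to H\ltimes(H\times_G X)$ along an inclusion $G\hookrightarrow H$.
Since weak equivalences of 2-categories compose, it suffices to prove that $\Pi(\varphi)$ is a biequivalence for each of these generators separately. Concretely, in each case I will verify the three standard conditions: essential surjectivity on objects, essential surjectivity on 1-cells in each hom-category, and full faithfulness on 2-cells.

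For the extension generator (ii), the key structural input is the classical fixed-set decomposition $(H\times_G X)^L \cong \bigsqcup_{[h]} X^{h^{-1}Lh\cap G}$, indexed by representatives of the double cosets $L\backslash H/G$, together with the analogous decomposition of the orbit-category hom-spaces $(H/L')^L$. Under these identifications each component of a fixed set of the target corresponds to a fixed set of $X$, giving direct bijective matchings on objects and, after choosing representatives, on 1-cells and 2-cells. The three biequivalence conditions then reduce to bookkeeping over double cosets.

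For the quotient generator (i), the correspondence theorem identifies closed subgroups of $G/K$ with closed subgroups $L\leq G$ containing $K$. Given an object $(\bar G/\bar L,\bar x)$ of $\Pi_{G/K}(X/K)$, lifting $\bar x$ to $x\in X$ and taking $H=\mathrm{Stab}_G(x)$ gives $HK=L$ and an object $(G/H,x)$ of $\Pi_G(X)$ that maps onto $(\bar G/\bar L,\bar x)$. For 1-cells, since $K$ acts freely, both $X^H\to(X/K)^{\bar L}$ and $(G/H')^H\to((G/K)/\bar L')^{\bar L}$ (for a chosen lift $H'$ of $\bar L'$) are principal $K$-bundles; I would lift a given arrow $(\bar\alpha,[\bar\gamma])$ by lifting $\bar\gamma$ uniquely to a path $\gamma$ starting at $x$, whose endpoint then determines a lift $\alpha$ with $\gamma(1)=\alpha x'$. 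For 2-cells, fullness follows from the uniqueness of path-homotopy lifts: given a 2-cell $[\bar\sigma]$ between the images of $(\alpha_1,[\gamma_1])$ and $(\alpha_2,[\gamma_2])$, the condition $\bar\gamma_1*\bar\sigma\bar x'\simeq\bar\gamma_2$ forces the lift of $\bar\sigma$ starting at $\alpha_1$ to end precisely at $\alpha_2$. Faithfulness follows by lifting the homotopy between two candidate 2-cells with fixed endpoints.

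The main obstacle is checking the coherence of these various path-lifts in case (i): one must simultaneously lift paths in the fixed sets $(X/K)^{\bar L}$ and paths in the orbit-category morphism spaces $((G/K)/\bar L')^{\bar L}$, then verify that the results fit together into a naturality square in $X^H$ with matching endpoints, rather than merely endpoints in the same $K$-orbit. The argument will rely on the fact that the same free $K$-action controls the fibers of both principal bundles, so the ambiguity in one lift is synchronized with the other, and the homotopy condition built into the definition of a 2-cell rigidifies the remaining choice so that the 2-cell data lifts on the nose.
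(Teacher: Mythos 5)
Your global strategy---reducing via Proposition 3.5 of \cite{PS} to the two generating essential equivalences and checking the biequivalence conditions for each---is exactly the paper's, but in both cases your sketch skips the step that carries the real content. For the extension $L\ltimes X\to G\ltimes(G\times_L X)$, the double-coset decomposition you invoke only splits the fixed sets into disjoint pieces when $(G/L)^H$ is discrete, which fails for compact Lie groups: a path $\gamma(t)=[g_t,x_t]$ in $(G\times_L X)^H$ can move continuously in the $g$-coordinate, so 1-cells and 2-cells do not decompose over double cosets and there is no ``direct bijective matching'' on objects either (the functor is only \emph{essentially} surjective, via a connecting arrow $(g,c)$). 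The substance of the paper's proof here is Lemmas \ref{L:reparam1} and \ref{L:reparam2}, which use the equivariant slice theorem and the \emph{finiteness} of the isotropy groups to produce a continuous family $\ell_t\in L$ with $(g_t\ell_t)^{-1}Hg_t\ell_t=H$ such that $(g_t\ell_t)^{-1}\gamma$ lies in the image of $X^H$; this continuous straightening is what makes essential surjectivity on arrows and fullness/faithfulness on 2-cells go through, and nothing in your outline supplies a substitute for it.

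For the quotient $G\ltimes X\to (G/N)\ltimes(X/N)$, your argument leans on ``uniqueness'' of path and homotopy lifts, but $X\to X/N$ is a principal bundle for a possibly positive-dimensional group $N$ (e.g.\ $S^1$ acting on $T^2$), so a lift with prescribed starting point is far from unique---it can be twisted by any path in $N$ based at $e$---and the lifted endpoint is determined only up to the $N$-action. Hence the lift of $\bar\sigma$ starting at $\alpha_1$ need not end at $\alpha_2$, and the lifted homotopy need not have the prescribed boundary path. The paper repairs this with Lemma \ref{L:lift}: two lifts of the same path in $\bar X^{\bar H}$ with the same starting point differ by a continuous $\eta\colon I\to N$ taking values in the normalizer $N(H)$, and translating by $\eta$ corrects the boundary data while remaining inside $X^H$ and $(G/K)^H$. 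You correctly flag this synchronization as the main obstacle, but the resolution you propose (uniqueness of lifts) is false exactly in the positive-dimensional case the theorem is meant to cover. A smaller point: for surjectivity on objects you should take $H=p^{-1}(\bar L)\cap\mathrm{Stab}_G(x)$ rather than the full stabilizer, since $p(\mathrm{Stab}_G(x))$ may strictly contain $\bar L$.
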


\begin{cor}
For every equivariant essential equivalence  of smooth translation groupoids
$$
\varphi\colon G\ltimes X\to H\ltimes Y
$$
the induced functor between the discrete equivariant fundamental groupoids,
$$
\Pi^d(\varphi)\colon \Pi^d_G(X)\to\Pi^d_H(Y)
$$
is a weak equivalence of categories.
\end{cor}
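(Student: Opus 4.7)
The plan is to derive this Corollary directly from Theorem \ref{T:maineq}. The key observation is that $\Pi^d_G(X)$ is obtained from $\Pi_G(X)$ by applying $\pi_0$ to each hom-category, and $\pi_0$ carries equivalences of categories to bijections of sets. So once we know $\Pi(\varphi)$ is a biequivalence, passing to the quotient should yield a genuine equivalence of 1-categories essentially for free.

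First I would verify that $\Pi(\varphi)$ descends to a well-defined 1-functor $\Pi^d(\varphi)$ between the quotients. Since $\Pi(\varphi)$ is a 2-functor by Proposition \ref{Pi-functor}, it carries 2-cells to 2-cells; hence any two 1-arrows in $\Pi_G(X)$ that become identified in $\Pi^d_G(X)$ are sent to 1-arrows in $\Pi_H(Y)$ that become identified in $\Pi^d_H(Y)$. Next I would unpack the content of Theorem \ref{T:maineq}: a weak equivalence of 2-categories $F\colon \calC \to \calD$ (in the biequivalence sense) is essentially surjective on objects and, for every pair $c,c'$ of objects, induces an equivalence of hom-categories
$$
F_{c,c'}\colon \calC(c,c')\longrightarrow\calD(Fc,Fc').
$$
Applying $\pi_0$ to such an equivalence yields a bijection of sets, and applying $\pi_0$ pointwise on hom-categories of $\Pi_G(X)$ produces exactly the hom-sets of $\Pi^d_G(X)$. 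Hence $\Pi^d(\varphi)_{c,c'}$ is a bijection for every $c,c'$, i.e.\ $\Pi^d(\varphi)$ is fully faithful.

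For essential surjectivity on objects, I would argue as follows: given $d \in \Pi_H(Y)$, the hypothesis provides an object $c \in \Pi_G(X)$ and a 1-arrow $d \to \Pi(\varphi)(c)$ which is invertible up to 2-cells. Its image in the quotient $\Pi^d_H(Y)$ is a genuine isomorphism between $d$ and $\Pi^d(\varphi)(c)$, since the two-sided inverse up to 2-cells becomes a strict two-sided inverse once 2-cells are collapsed. Combining this with full faithfulness gives that $\Pi^d(\varphi)$ is an equivalence of categories, which is what is meant by weak equivalence in the statement.

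The only potential obstacle is pinning down the precise notion of ``weak equivalence of 2-categories'' used in Theorem \ref{T:maineq}; under any of the standard interpretations (biequivalence, or 2-functor inducing an equivalence on hom-categories plus essential surjectivity), the argument above is formal, because the passage from $\Pi_G(X)$ to $\Pi^d_G(X)$ is exactly the operation $\calC \mapsto \pi_0(\calC)$ applied to each hom-category, and this operation sends equivalences to bijections and preserves essential surjectivity.
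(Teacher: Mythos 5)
Your proposal is correct and follows the same route the paper intends: the corollary is stated as an immediate consequence of Theorem \ref{T:maineq}, obtained by observing that the passage from $\Pi_G(X)$ to $\Pi^d_G(X)$ is exactly $\pi_0$ applied to each hom-category (the $2$-cells here are invertible, so ``connected by a $2$-cell'' is already an equivalence relation), which sends the hom-category equivalences of a weak equivalence of $2$-categories to bijections and turns essential surjectivity up to $2$-cells into essential surjectivity on the nose. The paper gives no further argument, so your write-up simply makes explicit what it leaves implicit.
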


There are a couple of obvious types of maps which are equivariant  essential equivalences:  
if we have a $G$-space $X$ such that a normal subgroup $N$ of $G$ acts freely on $X$, 
then  the quotient map 
\begin{equation}\label{quotientform}
G\ltimes X \rightarrow G/N\ltimes X/N,
\end{equation}
is an essential equivalence.    Similarly, for any (not necessarily normal) subgroup $L$ of a group 
$G$ and $L$-space $X$, we can create a  $G$-space 
$G \times _L X = G \times Z / \sim$, where $[g\ell, z] \sim [g, \ell z]$ for any $\ell \in L$, and 
 the inclusion $X \to G \times_L X$ defined by $x \to [e, x]$ gives an essential equivalence 
\begin{equation}\label{inclform}
L\ltimes X\rightarrow G\ltimes (G\times_L X).
\end{equation}

It turns out that these are the only forms of equivariant weak equivalences we need to deal with,
since they generate all other equivariant essential equivalences through composition.

\begin{prop}[Prop.  3.5 \cite{PS}]  
Any equivariant essential equivalence is a composite of maps of the forms (\ref{quotientform}) 
and (\ref{inclform}) described above.
\end{prop}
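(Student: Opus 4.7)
The plan is to peel off the kernel of the group homomorphism using a quotient of type (\ref{quotientform}), and then recognize what remains as a map of type (\ref{inclform}). So let $(\varphi, f)\colon G \ltimes X \to H \ltimes Y$ be an equivariant essential equivalence, and set $N = \ker(\varphi)$, a closed normal subgroup of $G$.

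First I would show that $N$ acts freely on $X$. An essential equivalence is fully faithful, which in the translation-groupoid setting means that the square with vertical maps $(s,t)\colon G \times X \to X \times X$ and $(s,t)\colon H \times Y \to Y \times Y$ is a pullback along $(\varphi, f)$. Applied to $n \in N$ with $nx = x$, the arrow $(n, x)$ in $G \ltimes X$ maps to the unit arrow $(e_H, f(x))$ in $H \ltimes Y$, so by the uniqueness that comes from the pullback property, $n = e$. Since $N$ is a closed subgroup of the compact Lie group $G$ acting freely on the manifold $X$, the quotient $X/N$ carries a smooth manifold structure and the canonical functor $G \ltimes X \to (G/N) \ltimes (X/N)$ is a map of type (\ref{quotientform}).

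Next, the original functor factors through this quotient as $(\bar\varphi, \bar f)\colon (G/N) \ltimes (X/N) \to H \ltimes Y$, where now $\bar\varphi$ is injective. Setting $L = \bar\varphi(G/N)$ and $X' = X/N$, I identify the source with $L \ltimes X'$ and $\bar\varphi$ with the inclusion $L \hookrightarrow H$. By two-out-of-three, this residual map is again an equivariant essential equivalence. To see it has the form (\ref{inclform}), consider the map $\Phi\colon H \times X' \to Y$ defined by $(h, x') \mapsto h \bar f(x')$. Essential surjectivity makes $\Phi$ surjective, and fully faithfulness identifies its fibres: if $h_1 \bar f(x_1') = h_2 \bar f(x_2')$, then $h_2^{-1} h_1 \bar f(x_1') = \bar f(x_2')$, so by fully faithfulness there is a unique $\ell \in L$ with $\ell x_1' = x_2'$ and $\bar\varphi(\ell) = h_2^{-1} h_1$. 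This is precisely the relation defining $H \times_L X'$, so $\Phi$ descends to an $H$-equivariant bijection $H \times_L X' \to Y$ that intertwines $\bar f$ with the canonical inclusion $x' \mapsto [e, x']$.

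The main obstacle is ensuring that the structural smoothness survives both steps. For the first step one needs $X/N$ to carry a smooth manifold structure; since $N$ is closed in a compact Lie group it acts properly, and combined with the freeness just established, $X \to X/N$ becomes a principal $N$-bundle. For the second step one must upgrade the continuous $H$-equivariant bijection $H \times_L X' \to Y$ to a diffeomorphism, which follows from the local slice description of $\bar f$ extracted from essential surjectivity together with fully faithfulness in the smooth category. Compositionally, $(\varphi, f)$ is then the composite of the quotient $G \ltimes X \to (G/N) \ltimes X'$ and the induction $L \ltimes X' \to H \ltimes (H \times_L X')$, up to this diffeomorphic identification of the target with $H \ltimes Y$.
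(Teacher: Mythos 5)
Your proof is correct and follows essentially the same route as the argument for Proposition 3.5 in \cite{PS}, which is where this result is actually proved (the present paper only quotes it): there too one first shows that $N=\ker(\varphi)$ acts freely via faithfulness, passes to the quotient $(G/N)\ltimes (X/N)$ to reduce to the case of an injective homomorphism, and then uses essential surjectivity together with fully faithfulness to identify $Y$ with $H\times_L (X/N)$. I see no gaps.
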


Thus we have an explicit description of the equivalences in  $\mbox{\sf EqTrGpd}(W^{-1})$, and can verify that we have an orbifold invariant by checking its behaviour  under these two types of maps.  The proof of Theorem \ref{T:maineq} will follow from Propositions \ref{P:free} and \ref{P:incl} below, showing that $\Pi$ induces a weak equivalence of 2-categories under each of these types of equivariant essential equivalences. 

We will start by considering the case where a normal subgroup $N$ of $G$ acts freely.  

\begin{defn}\label{D:barnot}
We set the following notation:  $X$ is a $G$ space and $N$ is a normal subgroup acting freely.  Then we denote the projection  by $p:G \to G/N$, and abbreviate $G/N = \bar{G}$, $p(g) = \bar{g}$ and $p(H) = \bar{H}$ for any subgroup $H \subseteq G$.    Similarly, we denote the  projection $p:  X \to X/N$, and $p(x) = \bar{x}$.  
\end{defn}

Proving the equivalence of fundamental categories in this case will use some lifting of paths, and we will make use of the following lemma comparing different lifts.

\begin{lemma} \label{L:lift}  Suppose that $G$ is a compact Lie group acting smoothly on  $X$ and $N$ is a normal subgroup of $G$ which acts freely on $X$.  Suppose that $\gamma$ and $\zeta$ are two paths $[0,1] \to X^H$ such that $\gamma(0)=\zeta(0)$ and $p(\gamma) = p(\zeta)$ in $\bar{X}^{\bar{H}}$.  Then there exists a continuous path $\eta:  I \to G$ with $\eta(0) = e$, such that $\gamma(t) = \eta(t) \zeta(t)$ and $\eta(t)  \in N(H)$ for all $t \in [0, 1]$.    
    
\end{lemma}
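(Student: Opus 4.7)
The plan is to define $\eta(t)$ pointwise as the unique element of $N$ sending $\zeta(t)$ to $\gamma(t)$, verify the initial condition, argue continuity via the principal bundle structure of $p\colon X\to\bar X$, and finally check that each $\eta(t)$ normalizes $H$.

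First, for each fixed $t\in[0,1]$, the hypothesis $p(\gamma(t))=p(\zeta(t))$ in $\bar X$ says that $\gamma(t)$ and $\zeta(t)$ lie in the same $N$-orbit, so there is some $n\in N$ with $\gamma(t)=n\cdot\zeta(t)$. Since $N$ acts freely on $X$, this $n$ is unique; define $\eta(t)$ to be this element. At $t=0$ we have $\gamma(0)=\zeta(0)$, and freeness immediately forces $\eta(0)=e$.

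Second, I would establish continuity of $\eta$ using the principal bundle $p\colon X\to\bar X$. Because $N$ is a closed subgroup of the compact Lie group $G$, it is itself a compact Lie group, and its free smooth action on $X$ is automatically proper; hence $p\colon X\to\bar X$ is a smooth principal $N$-bundle. The shear map
$$
N\times X\to X\times_{\bar X}X,\qquad (n,z)\mapsto (nz,z),
$$
is a diffeomorphism, so its inverse $(y,z)\mapsto \tau(y,z)$, which picks out the unique $n\in N$ with $y=nz$, is smooth. Then $\eta(t)=\tau(\gamma(t),\zeta(t))$ is continuous as a composite of continuous maps, and takes values in $N\subseteq G$.

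Third, for the normalizer condition: given $h\in H$, normality of $N$ in $G$ gives $h\eta(t)h^{-1}\in N$. Using $\gamma(t),\zeta(t)\in X^H$,
$$
\bigl(h\eta(t)h^{-1}\bigr)\cdot\zeta(t)=h\eta(t)\cdot\zeta(t)=h\cdot\gamma(t)=\gamma(t)=\eta(t)\cdot\zeta(t).
$$
By uniqueness in the free $N$-action, $h\eta(t)h^{-1}=\eta(t)$ for every $h\in H$, so $\eta(t)\in C_G(H)\subseteq N_G(H)$, as required.

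The only step that is not essentially formal is continuity of $\eta$; this is where compactness of $G$ (and hence of $N$) is really used, to ensure that $X\to\bar X$ is a locally trivial principal bundle rather than merely a set-theoretic quotient. The remaining assertions are immediate consequences of freeness of the $N$-action together with normality of $N$ in $G$.
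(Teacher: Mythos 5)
Your proof is correct and follows essentially the same route as the paper's: both define $\eta$ via the inverse of the shear map $N\times X\to X\times_{\bar X}X$, $(n,z)\mapsto(nz,z)$, and then deduce $\eta(t)\in N(H)$ from freeness together with $\gamma(t),\zeta(t)\in X^H$. If anything, you are slightly more careful than the paper in justifying why the inverse of the shear map is continuous (properness of the action of the compact group $N$), where the paper simply asserts it is a homeomorphism from freeness.
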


\begin{proof}
 Consider the map $\epsilon:  N \times X \to X \times_{\bar{X}} X$ defined by $\epsilon(n, x) = (nx, x)$, where $X \times_{\bar{X}}X = \{ (x, y) | \bar{x} = \bar{y} \}$. 
 Because $N$ acts freely on $X$, this map is  a homeomorphism.  Now consider the map $I \to (X \times_{\bar{X}}X)$ defined by $(\zeta, \gamma)$.  Composing with $\epsilon^{-1}$ gives a map  $I \to N \times X$ defined by $(n_t, x_t) = (\eta, \gamma)$ where $\eta(t) \gamma(t)  = \zeta(t)$.  Since $\gamma(0) = \zeta(0)$ we must have $\eta(0) = e$.
 
 Now we know that $\gamma$ and $\zeta$ lie in $X^H$.  Thus if $h \in H$, then $h \gamma(t) = \gamma(t)$ and $h \zeta(t) = \zeta(t)$, so  $h\eta(t)  \gamma(t) = \eta(t) \gamma (t)$.   So $\eta(t) h\gamma(t)  = \eta(t) \gamma(t) = h\eta(t) \gamma(t)  = \eta'(t)h\gamma(t)$, where $\eta'(t)  = h\eta(t) h^{-1}$.  This means that $\eta'(t)  h\gamma(t) = \eta(t) h \gamma(t)$, and since $N$ acts freely,  $\eta(t) = \eta'(t)$.  Thus,  $h \eta(t) h^{-1} = \eta(t)$, so $\eta(t) \in N(H)$.  
\end{proof}

\begin{prop}   \label{P:free}  Suppose that $G$ is a compact Lie group acting smoothly on  $X$, and $N$ is a normal subgroup of $G$ which acts freely on $X$.  Then  the map $\Pi_G(X) \to \Pi_{G/N}(X/N)$ induced by the projection  is an equivalence of $2$-categories.  
\end{prop}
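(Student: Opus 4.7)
The plan is to show $\Pi(p)\colon\fg(X)\to\Pi_{\bar G}(\bar X)$ is a biequivalence of $2$-categories by verifying (i) essential surjectivity on objects, (ii) essential surjectivity on 1-morphisms in each hom-category, and (iii) bijectivity on 2-cells in each hom-category. Two structural facts drive the whole argument. First, the free action of $N$ gives $N\cap G_y=\{e\}$ for every $y\in X$, so the restriction of $p$ is a group isomorphism $G_y\cong\bar G_{\bar y}$; closed subgroups of $\bar G_{\bar y}$ then lift uniquely to closed subgroups of $G_y$. Second, $X\to\bar X$ and $G/K\to\bar G/\bar K$ (for any $K\leq G_y$, using $N\cap gKg^{-1}\subseteq N\cap gG_yg^{-1}=\{e\}$) are principal $N$-bundles, and their restrictions to the fixed sets $X^H\to\bar X^{\bar H}$ and $(G/K)^H\to(\bar G/\bar K)^{\bar H}$ are principal $(N\cap N(H))$-bundles; these supply the path- and homotopy-lifting we need, with the ambiguity in such lifts quantified by Lemma \ref{L:lift}.

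For (i), given $(\bar G/\bar K,\bar y)$, pick any $y\in p^{-1}(\bar y)$ and let $K\leq G_y$ be the subgroup corresponding to $\bar K$ under $G_y\cong\bar G_{\bar y}$; then $\Pi(p)(G/K,y)=(\bar G/\bar K,\bar y)$ on the nose. For (ii), fix $(G/H,x)$ and $(G/K,y)$ together with a 1-morphism $(\bar\alpha,[\bar\gamma])$ between their images. Lift $\bar\gamma$ to a path $\gamma$ in $X^H$ starting at $x$ using the principal bundle structure on $X^H\to\bar X^{\bar H}$. The endpoint $\gamma(1)$ lies over $\bar\alpha\bar y$, so $\gamma(1)=n\alpha_0 y$ for any preliminary lift $\alpha_0$ of $\bar\alpha$ and some $n\in N$; set $\alpha:=n\alpha_0$ so that $\gamma(1)=\alpha y$ and $\bar\alpha=\bar\alpha_0$. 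Since $\alpha y\in X^H$ we have $\alpha^{-1}H\alpha\subseteq G_y$, and the given relation $\bar\alpha^{-1}\bar H\bar\alpha\subseteq\bar K$ upgrades via $G_y\cong\bar G_{\bar y}$ to $\alpha^{-1}H\alpha\subseteq K$, so $(\alpha,[\gamma])$ is a genuine 1-morphism of $\fg(X)$ lifting $(\bar\alpha,[\bar\gamma])$.

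For (iii), given 1-morphisms $(\alpha,[\gamma]),(\alpha',[\gamma'])$ and a 2-cell $[\bar s]$ between their images, lift $\bar s$ to a path $s$ in $(G/K)^H$ starting at $\alpha K$ and lift the compatibility homotopy $\bar\Lambda$ to a homotopy $\Lambda$ in $X^H$ with $\Lambda(\cdot,0)=\gamma*sy$. Lemma \ref{L:lift} identifies the opposite edge $\Lambda(\cdot,1)$ with $\gamma'$ up to an element of $N\cap N(H)$ acting in the fiber, and the iso $G_y\cong\bar G_{\bar y}$ then forces $s(1)=\alpha'K$ after possibly modifying $s$ within its homotopy class to absorb the correction into the fiber. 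Hence $[s]$ is a 2-cell in $\fg(X)$ lifting $[\bar s]$. Injectivity is analogous: a homotopy rel endpoints between $\bar{s_1}$ and $\bar{s_2}$ in $(\bar G/\bar K)^{\bar H}$ lifts to a homotopy between lifts in $(G/K)^H$, again controlled by Lemma \ref{L:lift}. The main obstacle I expect lies precisely in (iii), namely keeping track of how lifts of paths and homotopies can drift within the fiber of the fixed-set maps and showing that the combination of Lemma \ref{L:lift} with the isomorphism $G_y\cong\bar G_{\bar y}$ always suffices to pin endpoints to the prescribed cosets.
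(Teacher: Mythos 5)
Your proposal is correct and follows essentially the same route as the paper: surjectivity on objects by lifting the base point and pulling the subgroup back through the isotropy isomorphism, path- and homotopy-lifting along the principal bundle structure of the fixed-set quotients, Lemma \ref{L:lift} to control the discrepancy between lifts, and injectivity of $p$ on isotropy groups (freeness of the $N$-action) to verify the subconjugacy conditions. The one imprecision is in your step (iii): the correction supplied by Lemma \ref{L:lift} is a \emph{path} $\eta(t)$ in $N\cap N(H)$ rather than a single fiber element, and it is absorbed not by a homotopy of $s$ rel endpoints but by replacing $s$ and $\Lambda$ with the pointwise products $\eta s$ and $\eta\Lambda$ (which moves the endpoint of $s$ onto the required coset), which is exactly the adjustment the paper makes.
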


\begin{proof} Throughout this proof, we use the notation from Definition \ref{D:barnot}.

{\bf Surjective on Objects:}    Suppose that $(\bar{G}/\bar{L}, \bar{x}) $ is an object of $\Pi_{\bar{G}} (\bar{X})$.  Let $x\in X$ such that $p(x) = \bar{x}$, and define $L = p^{-1}(\bar{L}) \cap I_x$ where $I_x$ denotes the isotropy of $x$.  Then $L \subseteq I_x$ and so $x \in X^L$.  We claim that $p(L) = \bar{L}$:  if $\bar{\ell} \in p(L)$, then $\bar{\ell} = p(\ell)$ for $\ell \in L$, and so $\ell \in p^{-1}(L)$ and hence $\bar{\ell} \in \bar{L}$.  Conversely,  if $\bar{\ell} \in \bar{L}$, then choose $\ell$ in $G$ such that $p(\ell) = \bar{\ell}$.  So since $\bar{\ell} \bar{x} = \bar{x}$, we know there exists $n \in N$ such that $n\ell x = x$.  So $n \ell \in I_x$ and $n \ell \in p^{-1}(\bar{L})$, and so $n \ell \in L$ such that $p(n \ell) = \bar{\ell}$, showing that $\bar{L} \subseteq p(L)$.   
Therefore $(G/L, x)$ is an element of $\Pi_G(X)$ such that $p(G/L, x) =( \bar{G}/\bar{L}, \bar{x})$. 

{\bf Surjective on Arrows:} Suppose we have objects  $(G/H, x)$ and $(G/K, y)$ of $\Pi_G(X)$,  and an arrow $(\bar{G}/\bar{H}, \bar{x}) \to (\bar{G}/\bar{K}, \bar{y})$ in $\Pi_{\bar{G}}(\bar{X})$.  This arrow is given by $(\bar{\alpha}, [\bar{\gamma}])$ where $\bar{\alpha}:  \bar{G}/\bar{H} \to \bar{G}/\bar{K}$, so $\bar{\alpha} \in \bar{G}/\bar{K}$ with $\bar{\alpha}^{-1} \bar{H} \bar{\alpha} \subseteq \bar{K}$, and $\bar{\gamma}$ represents a homotopy class of paths $\bar{\gamma}:  \bar{x} \to \bar{\alpha} \bar{y}$ in $\bar{X}^{\bar{H}}$.

First, choose $\hat{\alpha} \in G $ such that $p(\hat{\alpha}) = \bar{\alpha}$.  
Next,  $p:  X^H \to \bar{X}^{\bar{H}}$ is the quotient of the free action of $N$, so this is a principal fibration and we can lift $\bar{\gamma}$ to $\gamma \in X^H$ starting at $x$.  So we have $\gamma:  x \to z$ in $X^H$, and we know that $\bar{z} = \bar{\alpha} \bar{y}=  \overline{\hat{\alpha} y}$.  Therefore there exists $n \in N$ such that $n \hat{\alpha} y = z$.    Define $\alpha = n\hat{\alpha}$, so that $\gamma:  x \to \alpha y$ in $X^H$.   Then $p(\alpha, [\gamma]) = (\bar{\alpha}, [\bar{\gamma}]) $.   So we just need to show that $\alpha:  G/H \to G/K$.  To see this, we need to show that $\alpha H \alpha^{-1} \subseteq K$.

Let $h \in H$.  Then $\bar{h} \in \bar{\alpha} \bar{K} \bar{\alpha} ^{-1} $, and  $h = \alpha k \alpha^{-1} n$ for some $n \in N$.  Now $ky = y$ for any $k \in K$, and we know that $\alpha y \in X^H$, so $h\alpha y = \alpha y$ for any $h \in H$, and hence $\alpha^{-1}h \alpha y = y$.     Thus $y = ky = \alpha^{-1} h \alpha y = \alpha^{-1} (\alpha k \alpha^{-1} n) \alpha y = k \hat{n} y$.   So $y = \hat{n}y$ and $\hat{n} = e$ since $N$ acts freely.  So $h = \alpha k \alpha^{-1}$ as required.  

{\bf Full on 2-cells:}   Suppose we have two arrows $(\alpha, [\gamma])$ and $(\beta, [\zeta])$ from $(G/H, x)$ to $(G/K, y)$ in $\Pi_G(X)$.    So $\gamma:  x \to \alpha y$ and $\zeta:  x \to \beta y$ in $X^H$.  And suppose there exists a 2-cell from  $p(\alpha, [\gamma])$ to $p(\beta, [\zeta])$ in $\Pi_{\bar{G}}(\bar{X})$.   This means that we have  $\bar{\sigma}$ in  $\overline{(G/K)^H}$ from $\bar{\alpha}$ to $\bar{\beta}$ such that $\bar{\gamma} * \bar{\sigma} \bar{y}$ is homotopic to $\bar{\sigma}$ via $\bar{\Lambda}$ in $\bar{X}^{\bar{H}}$:  

$$
\xymatrix{
\bar{x} \ar[r]^{\bar{\gamma}} \ar[dr]_{\bar{\zeta}}^{\bar{\Lambda}} &
\bar{\alpha} \bar{y} \ar[d]^{\bar{\sigma} \bar{y}}
\\
& \bar{\beta} \bar{y}
}
$$
 Now $y \in X^K$, so $K \cap N = \{ e\}$ since $N$ acts freely.  So the map $(G/K)^H \to \overline{(G/K)^H}$ is a principal fibration and we can lift $\bar{\sigma}$ to $\sigma$ in $(G/K)^H$ starting at $\alpha$.  
Next, we can lift the homotopy $\bar{\Lambda}$ to $X^H$ starting at $\gamma * \sigma y$ to get a homotopy $\Lambda$:  
$$
\xymatrix{
x \ar[r]^{{\gamma}} \ar[dr]_{{\omega}}^{\Lambda} &
{\alpha}{y} \ar[d]^{{\sigma} {y}}
\\
& z
}
$$  Now we have a new end path $\Lambda(s, 1) = \omega$ such that $\bar{\omega} = \bar{\zeta}$ in $\bar{X}^{\bar{H}}$.  
 By Lemma \ref{L:lift}, there is a continuous $\eta:    I \to N$ such that $\zeta = \eta \omega$, and $\eta(t) \in N(H)$.    We will use this $\eta$ to adjust $\Lambda$:  define $\Lambda' (s, t) = \eta(t) \Lambda(s,t)$.    

$$
\xymatrix{
x \ar[r]^{{\gamma}} \ar[dr]_{\zeta = \eta{\omega}}^{\Lambda'} &
{\alpha}{y} \ar[d]^{{\eta \sigma} {y}}
\\
&  \beta y
}
$$  
We claim that  $\eta \sigma$ together with the homotopy $\Lambda'$ gives a 2-cell in $\Pi_G(X)$.  So we need that $\eta \sigma \in (G/K)^H$ and $ \Lambda'$ is in $X^H$.   This follows from the fact that  $\sigma$ satisfies $\sigma^{-1} H \sigma \subseteq K$ and $\eta(t)  \in N(H)$, so $(\eta \sigma)^{-1} H (\eta \sigma) = \sigma^{-1} H \sigma \subseteq K$; similarly  $\Lambda \in X^H$, so $\Lambda' = \eta \Lambda \in X^{\eta^{-1} H \eta} = X^H$.

{\bf Faithful on 2-cells:}   Suppose we have $\sigma_1$ and $\sigma_2$ representing 2-cells from $(\alpha, [\gamma])$ to $(\beta, [\zeta])$ in $\Pi_G(X)$.    So   there are homotopies $\Lambda_1:  \zeta \simeq \gamma * \sigma_1 y$ and $\Lambda_2:  \zeta \simeq \gamma * \sigma_2y$ in $X^H$.  And suppose that $[\bar{\sigma}_1] = [\bar{\sigma}_2]$, meaning that there is a   homotopy $\bar{\Omega}$ between $\bar{\sigma}_1$ and $\bar{\sigma}_2$ in $\overline{(G/K)^H}$.   Let $Y= (G/K)$;  then since $K \cap N = \{ e\}$, $Y$ is a free $N$-space, and so the projection $p$ is a  principal fibration and  we can lift $\bar{\Omega} $ in $\bar{Y}^{\bar{H}}$ to  $\Omega$ in $Y^H$, starting at $\sigma_1$.  Thus we have a homotopy from  from $\sigma_1$ to $\omega$, where $\bar{\sigma}_2 = \bar{\omega}$.  Applying Lemma \ref{L:lift} gives $\eta(t):  I \to   N$  such that $\eta  \omega = \sigma_2$ and $\eta \in N(H)$.  Again, we define a new homotopy $\Omega'(s,t)  = \eta(t) \Omega(s,t)$, and note that  since $\Omega \in Y^H$, so is   $\eta  \Omega$.  Thus $\Omega'$ gives a homotopy from  $\sigma_1 \simeq \sigma_2$ in $(G/K)^H$, showing that $[\sigma_1] = [\sigma_2]$ in $\Pi_G(X)$.    
\end{proof}

Now we consider the other case, where we have an $L$ space $X$  and take the inclusion of $X$ to $G \times_L X$.    For this, we need the following lemmas. 
\begin{lemma}\label{L:reparam1}  Given a path $\gamma$ in $(G \times_L X)^H$ defined by $\gamma(t) = (g_t, x_t)$ with $g_0 = e$, there exists a continuous path $\ell_t \in L$ with $\ell_0 = e$, such that \begin{itemize}
    \item $(g_t\ell_t)^{-1} H g_t\ell_t = H$ and
    \item $(g_t\ell_t)^{-1} \gamma $ lies in the image of $X^H$ in $(G \times_LX)^H$.  
   \end{itemize}
\end{lemma}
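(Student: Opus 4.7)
The plan is to translate the fixed-point condition $\gamma(t) \in (G \times_L X)^H$ into group theory and then apply path-lifting in a principal bundle. For each $h \in H$, the equation $h \cdot [g_t, x_t] = [g_t, x_t]$ in $G\times_L X$ forces some $\ell \in L$ with $hg_t = g_t\ell$ and $\ell x_t = x_t$; so $g_t^{-1} h g_t \in L_{x_t}$. Setting $K_t := g_t^{-1} H g_t$ therefore produces a continuous family of closed subgroups of $L$ with $K_0 = H$ and $K_t \subseteq L_{x_t}$. I will seek $\ell_t \in L$ with $\ell_0 = e$ and $\ell_t^{-1} K_t \ell_t = H$; this is exactly condition (1).

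The crux is to show that each $K_t$ is $L$-conjugate (not merely $G$-conjugate) to $H$, and that the conjugating elements can be chosen continuously. For the conjugacy, I would invoke the classical local rigidity of compact Lie subgroups (Montgomery--Zippin): the $L$-conjugacy class of a closed subgroup is open in the Hausdorff topology on closed subgroups of $L$. The set $\{t \in [0,1] : K_t \text{ is } L\text{-conjugate to } H\}$ is then open (by local rigidity), closed (by compactness of $L$ and continuity of $t \mapsto K_t$), and contains $0$, so it equals $[0,1]$. For the continuous choice, the orbit map $L \to L/N_L(H)$, $\ell \mapsto \ell H \ell^{-1}$, is a principal $N_L(H)$-bundle and hence a fibration; lifting the path $t \mapsto K_t$ starting from $e$ over $K_0 = H$ yields $\ell_t \in L$ with $\ell_t H \ell_t^{-1} = K_t$, equivalently $(g_t \ell_t)^{-1} H (g_t \ell_t) = H$.

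For condition (2), I would compute
\[
(g_t \ell_t)^{-1} \cdot [g_t, x_t] = [\ell_t^{-1}, x_t] = [e,\ \ell_t^{-1} x_t],
\]
using $\ell_t^{-1} \in L$ and the defining equivalence $[g\ell, z] = [g, \ell z]$. To check $\ell_t^{-1} x_t \in X^H$, for $h \in H$ write
\[
h (\ell_t^{-1} x_t) = \ell_t^{-1} (\ell_t h \ell_t^{-1}) x_t;
\]
since $\ell_t h \ell_t^{-1} \in \ell_t H \ell_t^{-1} = K_t \subseteq L_{x_t}$, this element fixes $x_t$, giving the desired equality.

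The main obstacle will be the local-rigidity step establishing that $K_t$ stays in the $L$-conjugacy class of $H$. Once that is in place, the path lifting in the principal bundle $L \to L/N_L(H)$ is standard, and the rest of the argument reduces to the bookkeeping displayed above.
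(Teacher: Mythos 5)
Your argument is essentially correct in the setting where the lemma is actually used, but it takes a genuinely different route from the paper. The paper's proof is local and constructive: it invokes the equivariant slice theorem to cover the path $x_t$ by finitely many charts of the form $L\times_{I_x}Y$, reads off a candidate $\ell_t$ from the $L$-coordinate in each chart, uses the discreteness of the isotropy groups to force the local family of conjugates $\ell_t^{-1}g_t^{-1}Hg_t\ell_t$ to be constant, and then splices the local choices together with transition elements at overlap points. You instead argue globally: you extract from the fixed-point condition the continuous family $K_t=g_t^{-1}Hg_t\subseteq L_{x_t}$, use rigidity of compact subgroups plus connectedness of $[0,1]$ to see that every $K_t$ is $L$-conjugate to $H$, and then choose the conjugators continuously by path lifting in the bundle $L\to L/N_L(H)$. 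Your route avoids all of the chart-by-chart bookkeeping and is cleaner to state; the paper's route is more self-contained (it needs only the slice theorem and discreteness of isotropy) and its patching scheme is what gets upgraded verbatim to the two-parameter version in the next lemma, whereas you would need to replace path lifting by lifting a square through the same fibration (which also works, but should be said).

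One point needs tightening. The general principle you invoke --- that the $L$-conjugacy class of a closed subgroup is open in the Hausdorff topology on closed subgroups --- is false as stated: for example $SO(2)\subset SO(3)$ is a Hausdorff limit of finite cyclic subgroups, none of which is conjugate to it. Montgomery--Zippin only gives that subgroups near $K$ are conjugate to \emph{subgroups} of $K$. Your argument survives because in this paper the isotropy groups of the $L$-action are finite, so $H$ (being conjugate into some $L_x$ once $(G\times_LX)^H\neq\emptyset$) is finite and every $K_t$ has the same finite order $|H|$; combining Montgomery--Zippin with the fact that a closed subgroup Hausdorff-close to a finite group of order $n$ must have order exactly $n$ (pigeonhole on the separation of the $n$ points) yields the openness you need. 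You should make this dependence on finiteness explicit, since it is exactly the hypothesis that the paper's slice-theorem argument also leans on when it declares the continuous family of subgroups of a discrete isotropy group to be constant.
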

\begin{proof}
 Suppose we have a path  $\gamma$ in $(G \times_L X)^H$ defined by $\gamma(t) = (g_t, x_t)$ with $g_0 = e$.  By the equivariant slice theorem, at any point $x \in X$, there exists a neighbourhood of the form $L \times_{I_x} Y$ for $I_x$ the isotropy group of $x$, and $Y$ an $I_x$ space.  By compactness, we can cover the path $x_t$ with a finite number of such neighbourhoods $N_0 N_1, N_2, \dots, N_k$ around $x_0, x_{s_1}, x_{s_2}, \dots, x_{s_k}$.     Now for each neighbourhood, $\gamma$ takes the form $(g_t, (n_t, y_t))$ where $y_t \in Y_j$ and $Y_j$ is an  $I_{x_{s_i}}$-space.  Our strategy will be to choose, for each neighbourhood, a 'transition' element $\hat{\ell}_j$ and then define $\ell_t = n_t \hat{\ell}_j$.   We will also choose  transition points $x_{t_1}, x_{t_2}, x_{t_3}, \dots, x_{t_n}$ where each $x_{t_i}  \in N_{i-1} \cap N_i$; these are the points where we will switch from using one coordinate system to another.   
 
 Start with the first neighbourhood, so $\gamma = (g_t, x_t)$ and $x_t = (n_t, y_t)$ for a continuous choice of $n_t \in L$ and $ y_t \in Y_0$.   Now by assumption,  $(g_t, (n_t, y_t)) \in (G \times_L X)^H$, so $x_t = (n_t, y_t) \in X^{g_t^{-1} H g_t}$ and therefore  $y_t \in Y^{\ell_t^{-1} g_t^{-1} H g_t \ell_t}$;  note that $g_0 =e$ and $n_0 = e$.  Now ${\ell_t^{-1} g_t^{-1} H g_t \ell_t}$ forms a continuous family of conjugate subgroups of $I_{x_0}$, starting from $H$ at $t=0$;  and since $I_{x_0}$ is discrete, this family must be constant. So for the first neighbourhood, we take the transition element $\hat{\ell}_0 = e$, and define $\ell_t = n_t$ for $0 \leq t \leq t_1$,  so that   ${\ell_t^{-1} g_t^{-1} H g_t \ell_t} = H$ and $(g_t \ell_t)^{-1} \gamma = (e, (e, y_t)) $ lies in the image of $X^H$.      
 
 Now we will inductively assume   that the neighbourhood around $x_{s_{j-1}}$ gives coordinates $(m_t, y_t)$ for $y_t \in Y$ for $Y$ an $I_{x_{s_{j-1}}}$ space, and that $\ell_t = m_t \hat{\ell}_{j-1}$ has been defined for $0 \leq t_{j}$.  We want to extend to $N_j$,  given by coordinates $(n_t, z_t)$ where $z_t \in Z$ for $Z$ an $I_{x_{s_{j}}}$ space.    We use the fact that at our chosen transition point in the intersection, $x_{t_j}$ can be written both as $(m_{t_j}, y_{t_j})$ and $(n_{t_j}, z_{t_j})$, and define our new transition element $\hat{\ell}_{j} = n_{t_j}^{-1} m_{t_j} \hat{\ell}_{j-1}$.    So for $t_j \leq t \leq t_{j+1}$ we will define $\ell_t = n_t\hat{\ell}_j$; this will be continuous since at $t = t_j$, we have $\ell_{t_j} = m_{t_{j-1}}\hat{\ell}_{j-1} = n_{t_j} n_{t_j}^{-1} m_{t_{j-1}}\hat{\ell}_{j-1} = n_{t_j} \hat{\ell}_{j}$.  
 
 Now we know by assumption,  $(g_{t_j} \ell_{t_j})^{-1} H g_{t_j} \ell_{t_j} = H$, and so $$\hat{\ell}^{-1}_{j-1} m^{-1}_{t_j} g^{-1}_{t_j} H g_{t_j} m_{t_j} \hat{\ell}_{j-1} = H$$  Therefore $$m^{-1}_{t_j} g^{-1}_{t_j} H g_{t_j} m_{t_j} = \hat{\ell}_{j-1} H \hat{\ell}^{-1}_{j-1}$$ and so \begin{align*} n^{-1}_{t_j}  g^{-1}_{t_j} H g_{t_j} n_{t_j} & =n^{-1}_{t_j} m_{t_j} m^{-1}_{t_j} g^{-1}_{t_j} H g_{t_j} m_{t_j} m^{-1}_{t_j} n_{t_j} \\ & = n^{-1}_{t_j} m_{t_j} \hat{\ell}_{j-1} H \hat{\ell}_{j-1} m^{-1}_{t_j} n_{t_j} \\ & = \hat{\ell}_j H \hat{\ell}_j^{-1}\end{align*}  So in $N_j$,  we have a family of subgroups $n^{-1}_{t}  g^{-1}_{t} H g_{t} n_{t} $ of $I_{x_{s_j}}$ which is equal to  $ \hat{\ell}_j H \hat{\ell}_j^{-1}$ at $t_j$,  and so by discreteness of $I_{x_s}$ they must be constant.  Thus we have that $$\hat{\ell}_j^{-1}  n^{-1}_{t}  g^{-1}_{t} H g_{t} n_{t}  \hat{\ell}_j = H$$ and so  $ (g_{t} \ell_{t})^{-1} H g_{t} \ell_{t} = H $ for $t_j \leq t \leq t_{j+1}$, and  $$(g_t\ell_t)^{-1} \gamma = \hat{\ell}_j^{-1} n_t^{-1} g_t^{-1} (g_t, (n_t, z_t)) = (e, (\hat{\ell}_j, z_t)) $$ which lies in the image of  $ X^H.$

\end{proof}

\begin{lemma}\label{L:reparam2}  Given a homotopy $\Lambda$ in $(G \times_L X)^H$ defined by $\Lambda(t) = (g_{s,t}, x_{s,t})$ with $g_{0, 0} = e$, there exists a continuous family  $\ell_{s,t} \in L$ such that \begin{itemize}
    \item $(g_{s,t}\ell_{s,t})^{-1} H g_{s,t}\ell_{s,t} = H$ and 
    \item $(g_{s,t}\ell_{s,t})^{-1} \Lambda $ lies in the image of $X^H$.  

\end{itemize}
\end{lemma}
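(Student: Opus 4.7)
The plan is to adapt the one-dimensional grid argument of Lemma \ref{L:reparam1} to a two-dimensional grid, using the discreteness of isotropy to handle matching on edges and the simple connectedness of $[0,1]^2$ to handle compatibility at corners. First, apply the equivariant slice theorem to cover the image of $x_{s,t}$ by finitely many $L$-slice neighbourhoods of the form $L \times_{I_p} Y$ in $X$. By a Lebesgue number argument, subdivide $[0,1]^2$ into a grid of closed rectangles $R_{ij} = [s_i, s_{i+1}] \times [t_j, t_{j+1}]$ fine enough that for each $(i,j)$ the rectangle $R_{ij}$ together with each of its immediate grid neighbours lies in a common slice neighbourhood. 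On each $R_{ij}$, continuously trivialise $x_{s,t} = [n^{(ij)}_{s,t}, y^{(ij)}_{s,t}]$ in the coordinates of the chosen slice, and set $\ell_{s,t} := n^{(ij)}_{s,t}\hat{\ell}_{ij}$ for constants $\hat{\ell}_{ij} \in L$ to be determined inductively.

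Next, process the rectangles in lexicographic order, starting with $\hat{\ell}_{00} = e$ (which is legitimate because $g_{0,0} = e$ and $\Lambda(0,0) \in (G \times_L X)^H$ already forces $x_{0,0} \in X^H$). For each subsequent $R_{ij}$, choose $\hat{\ell}_{ij}$ so that the new formula $n^{(ij)}_{s,t}\hat{\ell}_{ij}$ agrees with the previously defined $\ell_{s,t}$ at the corner shared with an already processed neighbour. The essential observation, which is the exact two-dimensional analogue of the discreteness step of Lemma \ref{L:reparam1}, is that on the overlap of two adjacent rectangles lying in a common slice $L \times_{I_p} Y$, the two local trivialisations of $x_{s,t}$ differ by a continuous map into the discrete isotropy $I_p$; hence this map is constant along the connected shared edge. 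Consequently, matching $\hat{\ell}_{ij}$ at a single point of an edge extends to matching along the whole edge. When $R_{ij}$ has two previously processed neighbours (to its left and below), the corresponding two shared edges meet at a single corner where the two inherited values of $\ell_{s,t}$ already coincide by the inductive hypothesis, so a single choice of $\hat{\ell}_{ij}$ resolves both matching constraints simultaneously.

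Once $\ell_{s,t}$ has been assembled continuously on $[0,1]^2$, the two required conditions follow rectangle by rectangle exactly as in Lemma \ref{L:reparam1}. On each $R_{ij}$ the conjugate $\ell_{s,t}^{-1} g_{s,t}^{-1} H g_{s,t} \ell_{s,t}$ is a continuous family of subgroups of the discrete finite isotropy $I_{p_{ij}}$, hence is constant on $R_{ij}$, and the constant equals $H$ throughout $[0,1]^2$ by tracing the edge-matchings back to the normalisation $\hat{\ell}_{00} = e$. Meanwhile $(g_{s,t}\ell_{s,t})^{-1}\Lambda(s,t) = [e,\hat{\ell}_{ij}^{-1} y^{(ij)}_{s,t}]$ is manifestly in the image of $X^H$.

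The main obstacle is the global consistency of the inductive matching across a two-dimensional grid, where in principle a cocycle or holonomy obstruction could appear. This is precisely what the simple connectedness of $[0,1]^2$ rules out: the lexicographic order guarantees that every newly processed rectangle meets the already processed region in at most two edges sharing a single corner, so the compatibility of the two prescriptions for $\hat{\ell}_{ij}$ reduces to equality at that one corner, and the discreteness argument then propagates it along the edges. Verifying continuity at the grid lines and checking the coherence of the corner-matching in the handful of boundary cases (first row, first column, interior) are the only remaining routine details.
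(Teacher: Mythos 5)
Your overall architecture (slice neighbourhoods, a grid subdivision, per-cell adjustment elements glued inductively) matches the paper's, but the step you call the ``essential observation'' is false, and it is exactly the point where the two-dimensional case is harder than Lemma \ref{L:reparam1}. You claim that on a shared edge the two local trivialisations $x_{s,t}=[n^{(i-1,j)}_{s,t},y^{(i-1,j)}_{s,t}]=[n^{(ij)}_{s,t},y^{(ij)}_{s,t}]$ differ by a continuous map into a discrete isotropy group, so that matching the constants $\hat{\ell}_{ij}$ at one corner propagates along the whole edge. That is only true when both trivialisations are taken with respect to the \emph{same} slice. Your two rectangles are trivialised in the coordinates of two \emph{different} chosen slices (they merely both sit inside some common slice neighbourhood), and the comparison element $\delta_{s,t}=\bigl(n^{(ij)}_{s,t}\bigr)^{-1}n^{(i-1,j)}_{s,t}$ carries a point of one slice to a point of the other; it is a transition cocycle for the $L$-action, valued in $L$, not in any finite isotropy group. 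For a free action it is literally a transition function of the principal bundle $X\to X/L$ and is generically non-constant: take $L=S^1$ acting on $X=S^3$ by the Hopf action (with $H$ trivial), where the transition between two local sections winds nontrivially. Consequently $n^{(i-1,j)}_{S,t}\hat{\ell}_{i-1,j}=n^{(ij)}_{S,t}\hat{\ell}_{ij}$ can hold at the corner yet fail elsewhere on the edge, and your assembled $\ell_{s,t}$ is discontinuous across grid lines. (This worry does not arise in Lemma \ref{L:reparam1} because there one switches formulas at a single point of an interval, so agreement at that one point suffices.)

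The paper's proof avoids this by \emph{not} taking the adjustment element constant on each cell: the new transition element is read off pointwise along the shared edge, $\hat{\ell}_{s,t}=n^{-1}_{S,t}m_{S,t}\hat{\ell}_{S,t}$, and then extended into the cell constantly in the transverse direction (and along diagonals in the interior case, where a cell must match inherited data along two edges meeting at a corner). Continuity across the edge then holds by construction, with no constancy claim needed, and discreteness of the isotropy is used only for what it actually gives: the continuous family of subgroups $\ell_{s,t}^{-1}g_{s,t}^{-1}Hg_{s,t}\ell_{s,t}$ of the finite group $I_{p}$ is constant on the connected cell, which yields the two required properties. To repair your argument you would need to replace the constant $\hat{\ell}_{ij}$ by edge-dependent adjustment data in the same way; the corner-matching and lexicographic bookkeeping you describe are fine, but they cannot carry the load by themselves.
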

\begin{proof}
We use a similar strategy as in the proof of Lemma \ref{L:reparam1}, working with local neighbourhoods of the form $L \times_I Y$ and producing adjustment elements that allow us to piece together the local coordinates into a continuous whole.    So we begin by covering the image of $x_{x,t}$ by a set of neighbourhoods of the form $L \times_{I_x} Y$ where $I_x$ is the isotropy of a point $x$, and $Y$ is a space with $I_x$ action.  By compactness, we will assume that the number of these neighbourhoods is finite, and that they cover a grid pattern where the horizontal and vertical sides of the images of the grid are in overlapping neighbourhoods:

 \begin{tikzpicture}
\draw[thick] [-] (0,0) -- (3,0);
\draw[thick] [-] (0,0) -- (0,2);
\draw[thick] [-] (1,0) -- (1,2);
\draw[thick] [-] (2,0) -- (2,2);
\draw[thick] [-] (3,0) -- (3,2);
\draw[thick] [-] (0,1) -- (3,1);
\draw[thick] [-] (0,2) -- (3,2);
\draw (.25,.25) circle [radius=1.2];
\draw (1.5,.25) circle [radius=1.1];
\draw (2.75,.25) circle [radius=1.2];
\draw (.25,1.75) circle [radius=1.2];
\draw (1.5,1.75) circle [radius=1.2];
\draw (2.75,1.75) circle [radius=1.2];

\end{tikzpicture}

In the first neighbourhood, centered at $x_{0, 0}$, we have coordinates $(n_{s,t}, y_{s,t})$ and can define  $\ell_{s,t} = n_{s,t}$; note that $\ell_{0,0} = e$.     To extend to new neighbourhoods, we work across the top row, then down the left row, and then through the middle of the grid, row by row.  So we need to be able to extend in the following scenarios:  when the $\ell_{s,t}$ have been defined for the neighbourhood to the left;  when the  $\ell_{s,t}$ have been defined for the neighbourhood above;  and when  $\ell_{s,t}$ have been defined for both  the neighbourhood to the left and the neighbourhood above.  In each case, we will use the grid edges in the overlap of adjacent neighbourhoods to give us our new adjustment elements.  These overlaps occur along lines instead of single points, so our adjustment elements will not be constant on each neighbourhood, but rather an extension of the edge adjustment lines.     All of our extensions will produce continuous adjustment factors, hence continuous choices of $\ell_{s,t}$, and the discreteness of the isotropy group $I_x$ on each neighbourhood will ensure that we retain the properties we need.  

To extend across the top row of neighbourhoods, we assume that we have defined our $\hat{\ell}_{s,t}$ on the neighbourhood to the left, and look along the transition line at $s = S$, along the segment $x_{S, t}$ for $0 \leq t \leq t_1$.  Then we have two neighbourhood parametrizations, $(m_{s, t}, y_{s, t})$ for the left neighbourhood and $(n_{s, t}, z_{s, t})$ for the right where we are looking to extend;  and the vertical  transition line at $s = S$.   We define the new transition element using the values along this transition line, constant horizontally, by   $\hat{\ell}_{s,t} = n^{-1}_{S, t} m_{S, t} \hat{\ell}_{S, t}$: 

  \begin{tikzpicture}
\draw[thick] [-] (0,0) -- (2,0);
\draw[thick] [-] (0,0) -- (0,1);
\draw[thick] [-] (1,0) -- (1,1);
\draw[thick] [-] (2,0) -- (2,1);
\draw[thick] [-] (0,1) -- (2,1);
\draw [-] (1,.2) -- (2,.2);
\draw [-] (1,.4) -- (2,.4);
\draw [-] (1,.6) -- (2,.6);
\draw [-] (1,.8) -- (2,.8);
\node at (0,-.6){$(m_{s,t}, y_{s,t})$};
\node at (2,-.6){$(n_{s,t}, z_{s,t})$};
\node at (1,1.6){$s=S$};
\draw (.5,.5) circle [radius=.9];
\draw (1.5,.5) circle [radius=.9];
\end{tikzpicture}

Similarly, to extend down the left row, we use neighbourhood parametrizations $(m_{s,t}, y_{s,t})$ for the upper and $(n_{s,t}, z_{s,t})$ neighborhood, and look at the horizontal  transition line at $t = T$.  We define the new transition elements using values along this line, constant vertically, by  $\hat{\ell}_{s,t} = n^{-1}_{s, T} m_{s, T} \hat{\ell}_{s, T}$: 

  \begin{tikzpicture}
\draw[thick] [-] (0,0) -- (1,0);
\draw[thick] [-] (0,0) -- (0,2);
\draw[thick] [-] (1,0) -- (1,2);
\draw[thick] [-] (0,2) -- (1,2);
\draw[thick] [-] (0,1) -- (1,1);
\draw [-] (.2,0) -- (.2,1);
\draw [-] (.4,0) -- (.4,1);
\draw [-] (.6,0) -- (.6,1);
\draw [-] (.8,0) -- (.8,1);
\node[align=right] at (2.3,1.5){$(m_{s,t}, y_{s,t})$};
\node[align=right] at (2.3,.5){$(n_{s,t}, z_{s,t})$};
\node[align=left] at (-.8,1){$t=T$};
\draw (.5,.5) circle [radius=.9];
\draw (.5,1.5) circle [radius=.9];
\end{tikzpicture}

Lastly, consider how to extend to a neighbourhood where $\hat{\ell}_{s,t}$ has been defined for a neighbourhood above and to the left of it. Suppose that the neighbourhood above our new one has coordinates $(m_{s,t}, y_{s,t})$, and the one to the left has coordinates $(m'_{s,t}, y'_{s,t})$; and the new neighbourhood has coordinates $(n_{s,t}, z_{s,t})$.   Then we have two transition segments.  Along the horizontal segment at $t = T$, we know that  $x_{s, T}$ can be written as both $(m_{s, T}, y_{s, T})$ and $(n_{s, T}, z_{s, T})$; and along the vertical segment $s=S$,  we know that  $x_{S, t}$ can be written as  $(m'_{S, t}, y'_{S, t})$ and $(n_{S, t}, z_{S, t})$.      Then we can extend  diagonally across the new neighbourhood using transition element $$\hat{\ell}_{s, t}  = \begin{cases} n^{-1}_{s-t, T}m_{s-t, T} \hat{\ell}_{s-t, T}  & { \textup{if}  }\,\,  s-S \leq t-T \\  n^{-1}_{S, s-t}m'_{S, s-t} \hat{\ell}_{S, s-t} &  { \textup{else}  }\\  \end{cases} $$ 
  \begin{tikzpicture}
\draw[thick] [-] (0,0) -- (2,0);
\draw[thick] [-] (0,0) -- (0,1);
\draw[thick] [-] (1,0) -- (1,1);
\draw[thick] [-] (0,1) -- (2,1);
\draw[thick] [-] (0,1) -- (1,1);
\draw[thick] [-] (1,1) -- (1,2);
\draw[thick] [-] (2,0) -- (2,2);
\draw[thick] [-] (2,1) -- (2,2);
\draw[thick] [-] (1,2) -- (2,2);
\draw [-] (1,1) -- (2,0);
\draw [-] (1.2,1) -- (2,.2);
\draw [-] (1.4,1) -- (2,.4);
\draw [-] (1.6,1) -- (2,.6);
\draw [-] (1.8,1) -- (2,.8);
\draw [-] (1,.8) -- (1.8,0);
\draw [-] (1,.6) -- (1.6,0);
\draw [-] (1,.4) -- (1.4,0);
\draw [-] (1,.2) -- (1.2,0);
\node[align=right] at (3.3,1.7){$(m_{s,t}, y_{s,t})$};
\node[align=right] at (0,-.8){$(m_{s,t}', y_{s,t}')$};
\node[align=right] at (3.3,.2){$(n_{s,t}, z_{s,t})$};
\node[align=left] at (-.8,1){$t=T$};
\node at (1,2.5){$s=S$};
\draw (.5,.5) circle [radius=.9];
\draw (1.5,.5) circle [radius=.9];
\draw (1.5,1.5) circle [radius=.9];

\end{tikzpicture}

 In this way we can define our transition elements on the entire region.

\end{proof}
\begin{prop}  \label{P:incl} If $L$ is a compact Lie group which acts smoothly on $X$ with finite isotropy groups, and  $L \subseteq G$ for some compact Lie group $G$,  then there is an inclusion map $L \into G$, and an equivariant inclusion  $\varphi:  X \to G \times_L X$ by $\varphi(x) = [e, x]$.  Then $\varphi:  \Pi_L(X) \to \Pi_{G}(G \times_L X)$ is an equivalence of $2$-categories.  
\end{prop}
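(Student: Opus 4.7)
The plan is to show that $\varphi$ is a biequivalence of $2$-categories by verifying four properties, in parallel with the proof of Proposition~\ref{P:free}: (a) every object in the target is isomorphic to one in the image of $\varphi$, (b) every arrow in the target between objects in the image is $2$-isomorphic to an arrow in the image, (c) $\varphi$ is full on $2$-cells, and (d) $\varphi$ is faithful on $2$-cells. The central tools are Lemmas~\ref{L:reparam1} and~\ref{L:reparam2}, which straighten paths and homotopies in $(G \times_L X)^H$ into the image of $X^H$. A key additional observation is the canonical identification $G/K \cong G \times_L (L/K)$, under which the same lemmas apply to paths and homotopies in $(G/K)^H$; this is legitimate because $L/K$ is an $L$-space with finite isotropy, since $K$ must be a finite subgroup of $L$ whenever $(L/K, y)$ is an object of $\Pi_L(X)$. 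For (a), given $(G/K, z)$ with $z = [g, x]$, the $K$-fixedness of $z$ forces $g^{-1}Kg \subseteq I^L_x \subseteq L$; setting $H := g^{-1}Kg$ gives $(L/H, x) \in \Pi_L(X)$ with $\varphi(L/H, x) = (G/H, [e, x])$, and the $G$-map $\alpha\colon eH \mapsto g^{-1}K$ (well-defined since $H = g^{-1}Kg$) paired with the constant path at $\alpha \cdot [g, x] = [e, x]$ gives an arrow whose strict inverse is $\beta\colon eK \mapsto gH$ paired with the constant path at $[g, x]$.

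For (b), given $(\alpha, [\gamma])\colon (G/H, [e, x]) \to (G/K, [e, y])$, I apply Lemma~\ref{L:reparam1} to $\gamma$, using a local lift $(g_t, x_t)$ with $g_0 = e$, to obtain a path $\ell_t \in L$ with $\ell_0 = e$ and a path $z_t$ in $X^H$ such that $\gamma(t) = [g_t \ell_t, z_t]$ and $(g_t \ell_t)^{-1} H (g_t \ell_t) = H$. The identification $[g_1 \ell_1, z_1] = [\alpha, y]$ yields $\mu \in L$ with $z_1 = \mu y$ and $g_1 \ell_1 \mu \in \alpha K$; combining the normalizer condition from the lemma with $H \subseteq \alpha K \alpha^{-1}$ forces $H \subseteq \mu K \mu^{-1}$, so $(\mu, [z_\bullet])$ is a valid arrow in $\Pi_L(X)$. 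The path $\sigma(t) := (g_t \ell_t)^{-1} \alpha$ in $(G/K)^H$ connects $\alpha$ to $\mu$, and the factorization $\gamma(t) = (g_t \ell_t)[e, z_t]$ supplies an explicit homotopy in $(G \times_L X)^H$ (obtained by deforming the twist factor $g_t \ell_t$ to the identity within a compatible two-parameter family) between $\gamma * \sigma y$ and $\varphi(z_\bullet)$. This exhibits $[\sigma]$ as an invertible $2$-cell $(\alpha, [\gamma]) \Rightarrow \varphi(\mu, [z_\bullet])$.

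For (c) and (d), I use $G/K \cong G \times_L (L/K)$ to apply Lemmas~\ref{L:reparam1} and~\ref{L:reparam2} inside $(G/K)^H$. For fullness, a $2$-cell $[\sigma]\colon \varphi(\alpha, [\gamma]) \Rightarrow \varphi(\alpha', [\gamma'])$ is represented by a path $\sigma$ in $(G/K)^H$ together with a filling homotopy $\Lambda$ in $(G \times_L X)^H$; straightening $\sigma$ to a path $\sigma'$ in $(L/K)^H$ and $\Lambda$ to a homotopy in $X^H$ (in a compatible way, with any endpoint discrepancies corrected using an adjustment element analogous to the $\eta$ used in Proposition~\ref{P:free}) produces a $2$-cell in $\Pi_L(X)$ whose image is $[\sigma]$. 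For faithfulness, two $2$-cells in $\Pi_L(X)$ becoming equal in $\Pi_G$ is precisely a homotopy in $(G/K)^H$ between the representing paths; applying Lemma~\ref{L:reparam2} to this homotopy, again via $G/K = G \times_L (L/K)$, produces a homotopy within $(L/K)^H$ and shows the $2$-cells are already equal in $\Pi_L(X)$.

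The main obstacle will be step (b): producing the element $\mu \in L$, verifying $H \subseteq \mu K \mu^{-1}$ via the normalizer condition from Lemma~\ref{L:reparam1}, and most delicately, constructing the fill-in homotopy in $(G \times_L X)^H$ that certifies $[\sigma]$ as a $2$-cell. The factorization $\gamma(t) = (g_t \ell_t)[e, z_t]$ is the crucial device, allowing a deformation of the twist factor that simultaneously reparametrizes the fiber path $z_t$; the analogous contraction recurs in (c) when transferring filling homotopies from $(G \times_L X)^H$ back to $X^H$ while preserving the endpoint conditions.
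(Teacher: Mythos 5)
Your proposal follows essentially the same route as the paper: the same four-part verification (essentially surjective on objects and on arrows, full and faithful on 2-cells), with Lemmas \ref{L:reparam1} and \ref{L:reparam2} doing the work of straightening paths and homotopies in $(G \times_L X)^H$ into the image of $X^H$, and the same deformation of the twist factor $g_t\ell_t$ producing the connecting 2-cells. The only notable variation is that in the 2-cell steps you route the reparametrization lemmas through the identification $G/K \cong G \times_L (L/K)$, whereas the paper applies them to homotopies living in $(G \times_L X)^H$ (acting on $y$ in the faithfulness step); both are legitimate, and your explicit coset/normalizer bookkeeping in step (b) is if anything more careful than the paper's.
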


\begin{proof}
 {\bf Essentially Surjective on Objects:}  Given $(G/H, [g, y])$ in $\Pi_G(G \times_LX)$, we know that  $[g, y] \in (G \times_LX)^H$, and so $[e, y] \in (G\times_L X)^{g^{-1}Hg}$.  So $y \in X^{g^{-1}Hg}$.  Consider $(L/g^{-1}Hg, y) \in \Pi_L (X)$.  Then $\varphi(L/g^{-1}Hg, y) = (G/g^{-1}Hg, [e, y])$ and $(g, c)$ represents an arrow from $(G/H, [g, y])$ to  $(G/g^{-1}Hg, [e, y])$,  where $c$ is the constant path at $[g, y]$.  
 
 {\bf Essentially Surjective on Arrows:}  Suppose we have  $(L/H, x)$ and $(L/K, y)$ in $\Pi_L(X)$, and an arrow $(\alpha, [\gamma]):  (G/H, [e,x]) \to (G/K, [e, y])$ in $\Pi_G(G \times_L X)$.  So  we have a path  $\gamma:  [e, x] \to [\alpha, y]$ in $(G \times_L X)^H$, and $\gamma$ takes the form  $\gamma(t) = [g_t, x_t]$.    By Lemma \ref{L:reparam1}, we  know that there exists a continuous family   $\ell_t$ such that  $(g_t\ell_t)^{-1} H g_t\ell_t = H$ and $(g_t\ell_t)^{-1} \gamma \in (e \times_L X^H)$.  Define $\zeta =(g_t\ell_t)^{-1} \gamma $.  Then $\zeta$ is a path in $X^H$ which starts at $x$ and ends at $\ell_1^{-1} y$, and so  $(e, (\ell_1^{-1}, \zeta))$ is an arrow in $\Pi_L(X)$ from $(L/H, x)$ to $(L/K, y)$.  Now define a 2-cell  $\sigma:  (g_t\ell_t)^{-1}$ from $(\alpha, \gamma)$ to $\varphi(\ell_1^{-1}, \zeta)$:  to show this is a 2-cell in $\Pi_G(G \times_L X)$,   we need a homotopy $\gamma * \sigma y  \simeq \zeta$.  We  define $\Lambda(s, t) = \sigma(s) \gamma(t)$: 
  $$
\xymatrix{
x \ar[r]^{{\gamma}} \ar[dr]_{\zeta = \sigma \gamma} &
{\alpha}{y} \ar[d]^{{\sigma} {y}}
\\
&  \ell^{-1}  y
}
$$  
Thus every arrow between $\varphi(L/H, x)$ and $\varphi(L/K, y)$ has an invertible 2-cell to an arrow in the image of $\Pi_L(X)$.  

{\bf Full on 2-cells}  Suppose we have two arrows $(\alpha, [\gamma])$ and $(\beta, [\zeta])$ from $(L/H, x)$ to $(L/K, y)$ in $\Pi_L(X)$, and that when we take their images under $\varphi$, there is a 2-cell connecting them in $\Pi_G(G \times_L X)$.  Specifically, $\gamma$ is a path from $x$ to $\alpha y$ in $X^H$, and $\zeta$ is a path from $x$ to $\beta y$ in $X^H$;  and there is a  $\sigma$ from $\alpha$ to $\beta$ in $(G/K)^H$, and a homotopy  $\Lambda$ from $ [e, \gamma] *[\sigma, y]$ to $[e, \zeta]$ in $(G \times_L X)^H$. Now apply Lemma \ref{L:reparam2}  to   $\Lambda(s,t) = [g_{s,t}, x_{s,t}]$   to get a continuous  $\ell_{s,t} \in L$ such that  $(g_t\ell_t)^{-1} \gamma \in (e \times_L X^H)$  and  $(g_t\ell_t)^{-1} H g_t\ell_t = H$.    

Define $\omega(t) = (g_{1, t} \ell_{1, t})^{-1} \sigma(t)$ and $\Omega(s,t) = (g_{s,t}\ell_{s,t})^{-1} \Lambda(s,t)$.  Then  $\omega$ is a map in $(L/K)^H$ from $(g_{1,0}\ell_{0,1})^{-1}\alpha$ to $(g_{1,1}\ell_{1,1})^{-1}\beta$, and $\Omega$ gives a homotopy in $X^H$ from  $(g_{s,0}\ell_{s,0})^{-1}\gamma * \omega y$ to $(g_{s,1}\ell_{s,1})^{-1} \zeta$. 

 $$
\xymatrix{
[e,x] \ar[rrr]^{(g_{s,0}\ell_{s,0})^{-1}{\gamma}} \ar[ddrrr]_{(g_{s,1}\ell_{s,1})^{-1} \zeta} & &  &
{(g_{1,0}\ell_{1,0})^{-1} \alpha}{y} \ar[dd]^{{\omega} {y}}
\\ \\
&&&  (g_{1,0}\ell_{1,0})^{-1}\beta  y
}
$$  We want to use this to create a 2-cell in $\Pi_L(X)$ from $(\alpha, [\gamma])$ to $(\beta, [\zeta])$.   Now we know that $\gamma \in X^H$, and  $[e, \gamma(s)] = \Lambda_{s,0} = (g_{s,0}, x_{s,0})$.  So we must have $g_{s,0} \in L$.  Thus we can define a 2-cell in $\Pi_L(X)$ from  $(\alpha, [\gamma])$ to $((g_{1,0}\ell_{1,0})^{-1} \alpha, [(g_{s,0}\ell_{s,0})^{-1}{\gamma}])$ using $g_{s,0} \ell_{s,0}$.  Similarly, on the other end, we have a 2-cell from $(g_{1,1}\ell_{1,1})^{-1} \beta, [(g_{s,0}\ell_{s,0})^{-1}{\zeta}])$ to   $(\beta, [\zeta])$ using $g_{s,1} \ell_{s,1}$, again using the fact that $ \Lambda_{s,1} = [e, \zeta]$ and hence $g_{s,1} \in L$.  

{\bf Faithful on 2-cells}  Suppose we have two 2-cells $\sigma, \omega$ in $\Pi_L(X)$ from $(\alpha, [\gamma])$ to $(\beta, [\zeta])$.    And when we embed in $G \times_L X$, they are homotopic:   we have $\Gamma:  \sigma \to \omega$ in $(G/K)^H$, which gives $\Gamma y:  \sigma y \to \omega y$ in $(G \times_L X)^H$.   Note that when we write $\Gamma y$ in the form $(g_{s,t}, x_{s,t})$ we can take $g_{s,t} = \Gamma(s,t)$ and $x_{s,t} = y$.     Apply Lemma \ref{L:reparam2} to $\Gamma y$ written in this form to get $\ell_{s,t}$;  then define $\hat{\Gamma} = (g_{s,t} \ell_{s,t})^{-1} \Gamma = \ell_{s,t}^{-1}$.  Because $\Gamma \in (G/K)^H$, we know that $g_{s,t} H g_{s,t}^{-1}  \leq K$, and so  $\ell_{s,t}^{-1} g_{s,t}^{-1} H g_{s,t} \ell_{s,t} \leq \ell_{s,t}^{-1} K \ell_{s,t}$ and so $\ell_{s,t}^{-1}$ gives a map from $L/H$ to $L/K$ as required;  giving a 2-cell in $\Pi_L(X)$ from $g_{s,0}^{-1} \sigma$ to $(g_{s,1}\ell_{s,1})^{-1} \omega$.  Then since $\sigma \in L$, we know that $g_{s,0} \in L$, allowing us to create a homotopy from $ g_{s,0}^{-1} \sigma$ to $\sigma$; and similarly from $(g_{s,1}\ell_{s,1})^{-1} \omega $ to $\omega$ in $(L/K)^H$.    Concatenating these homotopies gives the required homotopy from $\sigma$ to $\omega$, showing that these 2-cells were equivalent in $\Pi_L(X)$. 
\end{proof}
\begin{eg}
Suppose that $G = \mathbb{Z}/2$ acts on the circle $X = S^1$, where $\tau$ acts by rotation by $\pi$.    Then $G$ acts freely on $X$, with quotient space $\bar{X} \simeq S^1$,  and so the projection $G \ltimes X \to e \ltimes \bar{X}$ is an equivariant essential equivalence.  

Now $\bar{G} = \{e\}$ and so $\Pi_{\bar{G}}{\bar{X}}$ is just the usual fundamental groupoid of $\bar{X}$, with arrows indexed by the winding number of their path in $\mathbb{Z}$  between any two points.  Since $G$ is discrete, $\Pi_{\bar{G}}{\bar{X}}  = \Pi^d_{\bar{G}}{\bar{X}}$.    

On the other hand, $\Pi_G(X)$ consists of points $(G/e, x)$ and arrows $(e, [\gamma])$ where $\gamma$ is a path from $x$ to $y$, again classified by winding number $n$;  and arrows $(\tau, [\zeta])$ where $\zeta$ is a path from $x$ to $\tau y$, classified by winding number $m$.   Then the projection map takes $(e, [\gamma])$ to $(\bar{e}, [\bar{\gamma}])$, which will have winding number $2n$;  and takes $(\tau, [\zeta])$ to $(\bar{e}, [\bar{\zeta}])$ with winding number $2m+1$.  Again,  $\Pi_G(X) = \Pi^d_G(X)$.  So we have the equivalence of catgories $\Pi^d_G(X) \longrightarrow  \Pi^d_{\bar{G}}{\bar{X}}$.
\end{eg}

\begin{eg}
 Consider $G = S^1$ acting on the torus $X = T^2$ as in Example \ref{E:t2}.  The circle acts freely, so again the projection  $G \ltimes X \to e \ltimes \bar{X}$ is an equivariant essential equivalence.  Here $\bar{X} \simeq S^1$ and again $\Pi_{\bar{G}}{\bar{X}} =\Pi^d_{\bar{G}}{\bar{X}}  $ is the groupoid with arrows indexed by  $\mathbb{Z}$  between any objects.  
 
 The category $\Pi_G(X)$ was described in Example \ref{E:t2}, where it was shown that every arrow had a 2-cell to an arrow of the form $(e, [\gamma])$ where $\gamma$ was constant in the second coordinate.  Therefore $\Pi^d_G(X)$ isomorphic to $\Pi^d_{\bar{G}}{\bar{X}}$.  
\end{eg}

\begin{eg}
 Consider $L = \mathbb{Z}/2$ and $X = S^1$ from Example \ref{E:s1}.  We can consider $L$ as a subgroup of $G = S^1$, and form the space $G \times _{L} X = (G \times X)/ \sim$, where $(\lambda, x) \sim (\tau \lambda, \tau x) $ for $\tau = e^{i\pi}$.  Since $\tau$ acts on $X$ via a reflection, this is topologically a Klein bottle. 
 
 The objects of $\Pi_G(G \times_L X)$ are of the form $(G/e, [\alpha, \beta])$ for $[\alpha, \beta] \in S^1 \times_L S^1$, and $G/G, [\alpha, z])$ for $z  \in X^{{\mathbb Z} /2}$, so $z \in \{ E, W\}$. 
 
 Between points of the first type, we have arrows of the form $(\lambda, [\gamma])$ for $\gamma$ a path from $x$ to $\lambda y$ in the Klein bottle; so we have a copy of $S^1 \times ({\mathbb Z} \oplus {\mathbb Z})/<abab^{-1}>$.   Similarly the points $(G/G, [\alpha, z])$ have arrows $(\lambda, [\zeta])$ between them, where $\zeta$ is a path from $(\alpha, z)$ to $(\alpha', z)$ in $G \times_L X)^{{\mathbb Z}/2} \simeq S^1$; and arrows $(\lambda, [\gamma])$ from $(G/e, [\alpha, \beta])$ to $(G/G, [\alpha, z])$.  
 
 There are 2-cells identifying many of these arrows.  Between $(G/e, [\alpha_1, \beta_1])$ and $(G/e, [\alpha_2, \beta_2])$,  there are two-cells between arrows $(\lambda, [\gamma])$ and $(e, [\gamma * \Lambda^{-1}  (\alpha_2, \beta_2)$ by defining $\Lambda$ to be a  path in $S^1$ from $e$ to $\lambda$. \[\scalebox{.15}{\includegraphics{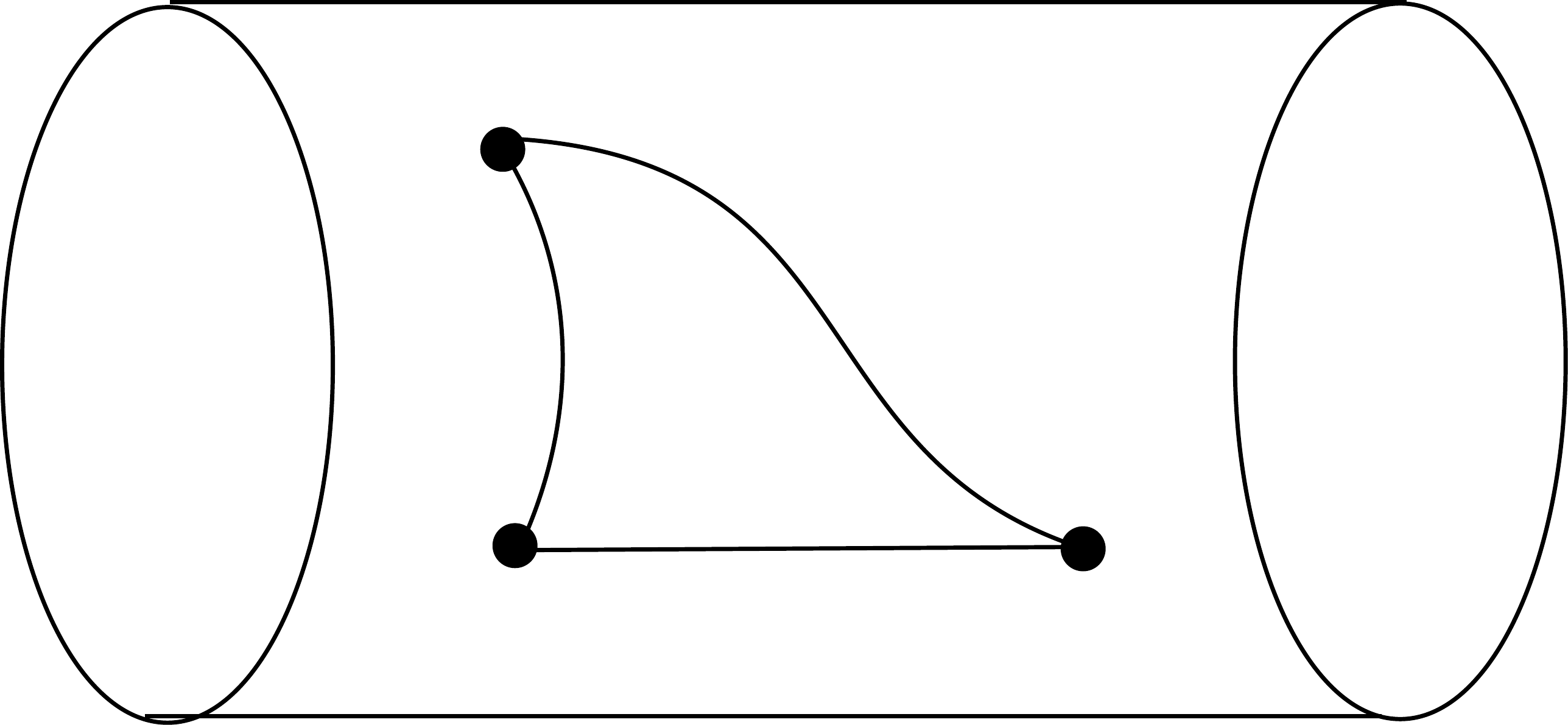}}\]   Now consider the arrow  $(e, [b])$ corresponding to a generating loop along $\beta$.  This has a 2-cell to the map $(\tau, [\hat{b}]) $  which is constant in $\beta$, where $\hat{b}$ is a path from $x$ to $\tau x$. \[\scalebox{.15}{\includegraphics{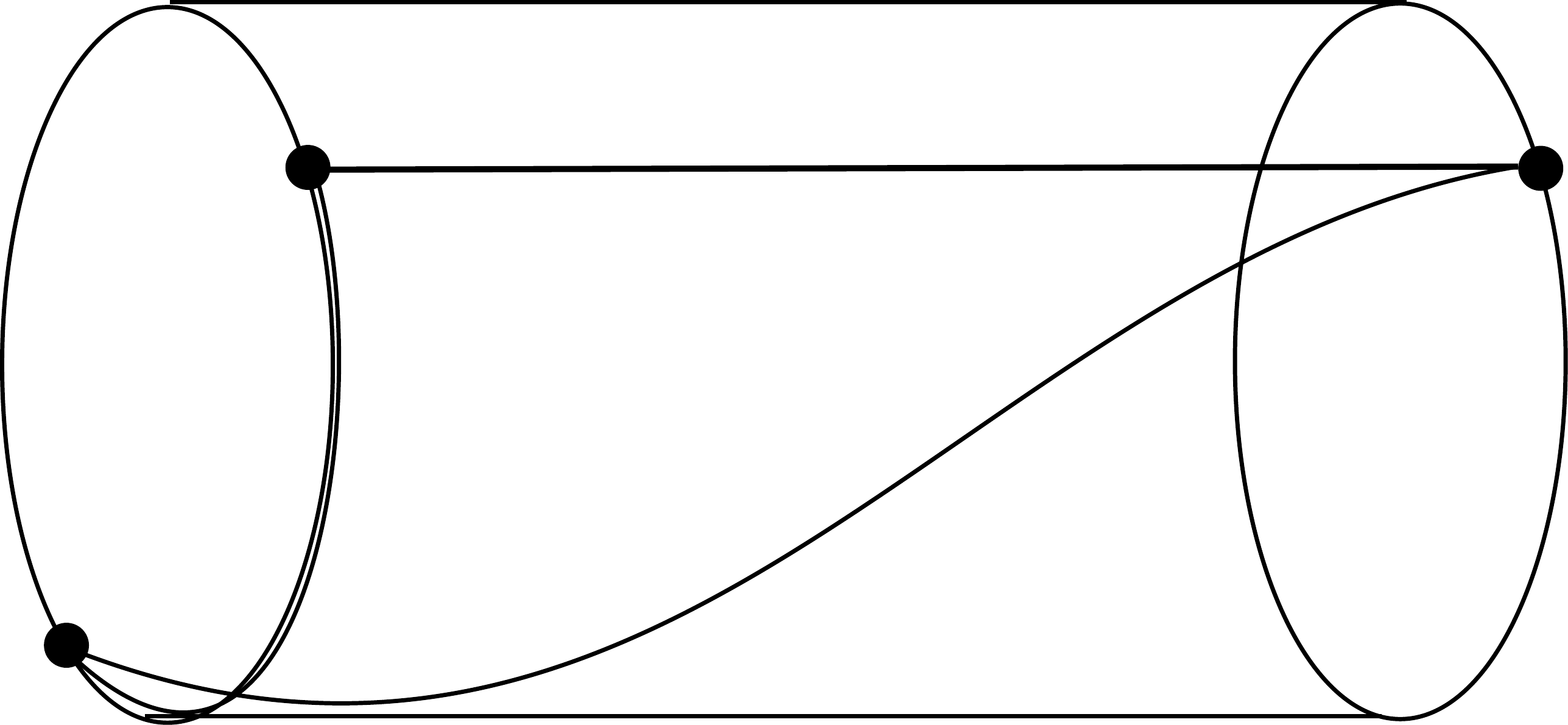}}\]  Then   $(\tau, [\hat{b}])^2 = e $.  Thus in $\Pi^d_G(G \times_L X)$, we have arrows indexed by elements of the group ${\mathbb Z} \oplus {\mathbb Z}/<abab^{-1}, b^2>$, recovering the infinite dihedral maps of Example \ref{E:s1}.    Similarly,  arrows   from $(G/{\mathbb Z}/2, [\alpha, z])$ to $(G/{\mathbb Z}/2, [\alpha', z])$ are  identified  in $\Pi^d_G(G \times_L X)$ by taking a path from $\alpha$ to $\alpha'$ in $S^1$ and using it to produce a 2-cell, and we get our equivalence $\Pi^d_L(X) \to \Pi^d_G(G \times_L X)$.   
\end{eg}

\end{document}